\title[From Moran to Wright Fisher]{When can the discrete Moran process may be replaced by Wright-Fisher diffusion?}
\author[G. Gorgui]{\textbf{\quad {Gorgui} GACKOU $^{\diamondsuit}$ \, \, }}
\address{{ {Gorgui} GACKOU}\\ Laboratoire de Math\'ematiques Blaise Pascal, CNRS UMR 6620, Universit\'e Clermont-Auvergne} \email{gorgui.gackou@uca.fr}
\author[A. Guillin]{\textbf{\quad {A.} Guillin $^{\diamondsuit}$ \, \, }}
\address{{ {Arnaud} GUILLIN}\\ Laboratoire de Math\'ematiques Blaise Pascal, CNRS UMR 6620, Universit\'e Clermont-Auvergne,
avenue des Landais, F-63177 Aubi\`ere.} \email{arnaud.guillin@uca.fr}
\author[A. Personne]{\textbf{\quad {Arnaud} Personne $^{\diamondsuit}$ \, \, }}
\address{{{Arnaud} PERSONNE}\\ Laboratoire de Math\'ematiques Blaise Pascal, CNRS UMR 6620, Universit\'e Clermont-Auvergne,
avenue des Landais, F-63177 Aubi\`ere.} \email{arnaud.personne@uca.fr}
\theoremstyle{definition}
\theoremstyle{plain}
\newtheorem{theorem}{Theorem}
\newtheorem{lemma}{Lemma}
\newtheorem{prop}{Proposition}
\newtheorem{remark}{Remark}
\newcommand{\pp}{\mathbb{P}}
\begin{document}

\maketitle

 \begin{center}

\textsc{$^{\diamondsuit}$ Universit\'e Clermont-Auvergne}
\smallskip

\end{center}


\begin{abstract}
The Moran discrete process and the Wright-Fisher model are the most popular models in population genetics. It is common to understand the dynamics of these models to use an approximating diffusion process, called Wright-Fisher diffusion. Here, we give a quantitative large population limit of the error committed by using the approximation diffusion in the presence of weak selection and weak immigration in one dimension. The approach is robust enough to consider the case where selection and immigration are Markovian processes, with limits jump or diffusion processes.
\end{abstract}

\section{Introduction}

The diffusion approximation is a technique in which a complicated and intractable (as the dimension increases) discrete Markovian process is replaced by an appropriate diffusion which is generally easier to study.
This technique is used in many domains and genetics and population dynamics are no exceptions to the rule. Two of the main models used in population dynamics are the Wright-Fisher (see for example \cite{fisher1},\cite{fisher2},\cite{Wright1},\cite{Wright2})
  and the Moran \cite{Moran} models which describe the evolution of a population having a constant size and subject to immigration end environmental variations. 
For large population limit, it is well known that the Moran process is quite difficult to handle mathematically and numerically. For example, the convergence to equilibrium (independent of the population size) or the estimation of various biodiversity index such as the Simpson index are not known. It is thus tempting to approach the dynamics of these Markovian process by a diffusion, called the Wright-Fisher diffusion, see for example \cite{ethier76}, \cite{EK86} or \cite{Kimura}, and work on this simpler (low dimensional) process to get  good quantitative properties. 

A traditional way to prove this result is to consider a martingale problem, as was developed by Stroock and Varadhan  in \cite{diffmultidim}, see also \cite{Dawson}, \cite{EK86} and \cite{EN89} for example for Wright-Fisher process with selection but without rates.
This technique ensures us that the discrete process converges to the diffusion when the size of the population grows to infinity. If the setting is very general and truly efficient, it is usually not quantitative as it does not give any order of the error committed in replacing the discrete process by the diffusion for fixed size of population. To obtain an estimation of this error we will consider another approach by Ethier-Norman in \cite{EthierNorman} (or \cite{errordiff}), which makes for a quantitative statement of the convergence of the generator using heavily the properties of the diffusion limit. For the Wright-Fisher model with immigration but without selection they showed that the error is of the order of the inverse f the population size, and uniform in time. Our main goal here will be to consider the more general model where 1) weak selection is involved; 2) immigration and selection may be also Markov processes. To include selection, constant or random is of course fundamental for modelization, see for example \cite{ewens2004}, \cite{Viotprocees}, \cite{kalyuzhny}, \cite{danino2016effect}, \cite{FOC2017}, \cite{danino2018fixation}, \cite{daninoshnerb} for recent references. Also, to study biodiversity, a common index is the Simpson index, which is intractable in non neutral model (see \cite{etienneOlff2} or \cite{etienne2005} in the neutral case, and even not easy to approximate via Monte Carlo simulation when the population is large. Based on the Wright-Fisher diffusion, an efficient approximation procedure has been introduced in \cite{pgj2018}. It is thus a crucial issue to get quantitative diffusion approximation result in the case of random selection to get a full approximation procedure for this biodiversity index.

Let us give the plan of this note. First in Section 2, we present the Moran model. As an introduction to the method, we will first consider the case of a constant selection and we find an error of the same order but growing exponentially or linearly in time. It will be done in section 3.
Sections 4 and 5 are concerned with the case of random environment. Section 4 considers the case when the limit of the selection is a pure jump process and Section 5 when it is a diffusion process. We will indicate the main modifications of the previous proof to adapt to this setting. An appendix indicates how to adapt the preceding proofs to the case of the Wright-Fisher discrete process.

\section{The Moran model and its approximation diffusion.}

Consider to simplify a population of $J$ individuals with only two species. Note that there no other difficulties than tedious calculations to consider a finite number of species. At each time step, one individual dies and is replaced by one member of the community or a member of a distinct (infinite) pool. To make precise this mechanism of evolution, let us introduce the following parameters:
\begin{itemize}
\item $m$ is the immigration probability, i.e. the probability that the next member of the population comes from the exterior pool;
\item $p$ is proportion of the first species in the pool;
\item $s$ is the selection parameter, which acts at favoring one of the two species.
\end{itemize}
Let us first consider that $m$, $p$ and $s$ are functions depending on time (but not random to simplify) and taking values in $[0,1]$ for the first two and  in $]-1;+\infty[$ for the selection parameter.\\
Note that this process may also be described considering mutation, rather than immigration but there is a one to one relation between these two interpretations. Our time horizon will be denoted by $T$.\\
Rather than considering the process following the number of elements in each species, we will study the proportion in the population of the first species. To do so, let $I_{J}=\lbrace \frac{i}{J}: i=0,1,2,\cdots,J\rbrace $, and we denote for all $f$ in $B(I_J)$, the bounded functions on $I_{J}$, $$\Vert f\Vert_{J}=\max\limits_{x\in I_{J}}|f(x)|.$$
Consider also $\Vert g\Vert=\sup|g|$ the supremum norm of $g$.\\
Let $X_{n}^{J}$, with values in $ I_{J} $, be the proportion of individuals of the first species in the community.\\
In this section, $X_{n}^{J}$ is thus the Moran process, namely a Markov process evolving with the following transition probabilities: denote $\Delta=\frac{1}{J}$\\

\begin{equation*}
  \left\{
    \begin{aligned}
      \pp(X^J_{n+1}=x+\Delta|X^J_{n}=x)&=(1-x)\left(m_{n}p_{n}+(1-m_{n})\frac{x(1+s_{n})}{1+xs_{n}}\right)\\&:=P_{x+}\\
       \pp(X^{J}_{n+1}=x-\Delta|X^J_{n}=x)&=x\left(m_{n}(1-p_{n})+(1-m_{n})\Big(1-\frac{x(1+s_{n})}{1+xs_{n}}\Big)\right)\\ &  :=P_{x-}\\
       \pp(X^{J}_{n+1}=x|X^J_{n}=x)&=1-P_{x+}-P_{x-}.
    \end{aligned}
  \right.
\end{equation*}

To study the dynamical properties of this process a convenient method developped first by Fisher \cite{fisher1}, \cite{fisher2} and then Wright \cite{Wright1}, \cite{Wright2},  aims at approximating this discrete model by a diffusion when the size of the population tends to infinity.\\

In the special case of the Moran model with weak selection and weak immigration, meaning that the parameters $s$ and $m$ are inversely proportional to the population size $J$, we usually use the process
 $\lbrace Y_{t}^{J}\rbrace_{t \geq 0}$
 taking values in  $I=[0,1]$  
 defined by the following generator: 
$$ L=\frac{1}{J^{2}}x(1-x)\frac{{\partial}^2}{\partial{x}^2}+ \frac{1}{J}[ sx(1-x)+m(p-x)]\frac{\partial}{\partial{x}}.$$

Note that, in weak selection and immigration, $s=s'/J$ and $m=m'/J$, so the process defined by $\lbrace Z_{t}\rbrace_{t \geq 0}=\lbrace Y_{J^{2}t}^{J}\rbrace_{t \geq 0}$ do not depend on $J$. Its generator is $$ \mathscr{L}=x(1-x)\frac{{\partial}^2}{\partial{x}^2}+ [ s'x(1-x)+m'(p-x)]\frac{\partial}{\partial{x}}$$
or equivalently by the stochastic differential equation
$$dZ_t=\sqrt{2Z_t(1-Z_t)}dB_t+\left[s'Z_t(1-Z_t)+m'(p-Z_t)\right]dt.$$ 

Our aim is to find for sufficiently regular test function, say $ f \in \mathscr{C}^{4} $, an estimation of :
\begin{align*}
\left\|\mathbb{E}_x\Big[f(Z_{[t]})\Big]-\mathbb{E}_x\Big[f(X_{[J^{2}t]}^{J})\Big] \right\|_J
\end{align*}
for $0\leq t \leq T $  and for all $ x $ in $ I_J$. By replacing $ Z_{t}$ by $Y_{J^{2}t}^{J} $ we thus get :
\begin{align*}
\left\|\mathbb{E}_x\Big[f(X_{[J^{2}t]}^{J}) \Big]-\mathbb{E}_x\Big[f(Y_{[J^{2}t]}^{J}) \Big] \right\|_J
\end{align*}
 for $ 0 \leq t \leq  T $ , and $ x \in I_J $. \\
So equivalently it is convenient to study, if we note $ n=[J^{2}t]$ :
\begin{align*}
\Vert\mathbb{E}_x\Big[f(X_{n}^{J} )\Big]-\mathbb{E}_x\Big[f(Y_{n}^{J}) \Big] \Vert_J
\end{align*}
 on  $ 0 \leq n \leq J^2T $ ,and  $ x \in I_J $.

\section{Estimate of the error in the approximation diffusion for constant weak immigration and selection}

\subsection{Main result}
We now give our main result in the case where immigration and selection are constant. It furnishes an estimation of the error committed during the  convergence of the discrete Moran process $X_{n}$ toward the Wright-Fisher diffusion process $Y_{n}$. 
\begin{theorem}
Let us consider the weak immigration and selection case, so that $s=\frac{s'}J$ and $m=\frac{m'}J$ for some $s'\in\mathbb{R}$, $m'\in\mathbb{R}_+$ ($J$ large enough). Let $p\in]0,1[$. Let $f \in C^{4}(I) $  then there exist positive $a$ and $b$ (depending on $m'$ and $s'$)  such that:
\begin{align*}
\left\| \mathbb{E}_x \Big[f(X_{n}^{J} )\Big]-\mathbb{E}_x \Big[f(Y_{n}^{J} ) \Big] \right\|_J \le\left(\Vert f^{(1)} \Vert_{J} +\Vert f^{(2)} \Vert_{J}\right)\frac{ae^{bn}}{J}+o(\frac{1}{J}) .
\end{align*}
If we suppose moreover that $m'>|s'|$ then there exists $a>0$ such that
\begin{align*}
\left\|\mathbb{E}_x \Big[f(X_{n}^{J} )\Big]-\mathbb{E}_x \Big[f(Y_{n}^{J} ) \Big] \right\|_J \le\left(\Vert f^{(1)} \Vert_{J} +\Vert f^{(2)} \Vert_{J}\right)\frac{an}{J}+o(\frac{1}{J}) .
\end{align*}
\end{theorem}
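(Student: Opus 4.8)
The plan is to follow the Ethier–Norman strategy: quantify the convergence of the discrete generator to the diffusion generator and then propagate this through the semigroup using the smoothness of the diffusion. Write $T_n f(x) = \mathbb{E}_x[f(X_n^J)]$ for the discrete semigroup and $U_n f(x) = \mathbb{E}_x[f(Y_n^J)]$ for the (discretely sampled) diffusion semigroup, with one-step operators $T$ and $U$ respectively. The telescoping identity
\begin{align*}
T_n f - U_n f = \sum_{k=0}^{n-1} T_{n-1-k}\,(T-U)\,U_k f
\end{align*}
reduces everything to controlling $\Vert (T-U) U_k f\Vert_J$. Here I would use that $T-U$ is a ``second-order-accurate'' approximation: a Taylor expansion of $T g(x)$ to fourth order gives $Tg(x) = g(x) + \tfrac{1}{J^2}\mathscr{L}_0 g(x) + \tfrac{1}{J^2}(\text{correction}) + O(\Vert g^{(3)}\Vert_J, \Vert g^{(4)}\Vert_J)\cdot J^{-3}$, where $\mathscr{L}_0$ is the generator of $Y^J$ rescaled by $J^2$; since $U g(x) = g(x) + \tfrac{1}{J^2}\mathscr{L}_0 g(x) + O(J^{-4})$ likewise, the leading $J^{-2}$ terms cancel and one is left with
\begin{align*}
\Vert (T-U)g\Vert_J \le \frac{C}{J^3}\left(\Vert g^{(3)}\Vert_J + \Vert g^{(4)}\Vert_J\right) + (\text{lower order}).
\end{align*}
Summing $n$ such terms yields the announced $O(n/J^3)\cdot(\text{higher derivatives})$ — but of course we want the bound in terms of $\Vert f^{(1)}\Vert_J$ and $\Vert f^{(2)}\Vert_J$, not derivatives of $U_k f$, so the real work is a gradient/derivative estimate on the diffusion semigroup.

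The key step, and the main obstacle, is therefore to bound $\Vert (U_k f)^{(j)}\Vert$ for $j=1,2,3,4$ in terms of $\Vert f^{(1)}\Vert$ and $\Vert f^{(2)}\Vert$ only, uniformly or with controlled growth in $k$. This is where the structure of the Wright–Fisher diffusion enters decisively. Differentiating the evolution equation $\partial_t v = \mathscr{L}v$ (with $\mathscr{L}$ the unscaled generator) gives that $w = \partial_x v$ satisfies a closed equation $\partial_t w = \mathscr{L}w + b'(x) w + a''(x)\cdot(\dots)$ where $a(x) = x(1-x)$, $b(x) = s'x(1-x) + m'(p-x)$; the zeroth-order coefficient governing the growth of $\Vert w\Vert$ is essentially $a''(x) + b'(x) = -2 + s'(1-2x) - m'$, whose supremum over $[0,1]$ is $-2 + |s'| - m'$ (roughly). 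When $m' > |s'|$ this is strictly negative (in fact $<-2$), giving a genuine contraction and hence the uniform-in-time, linear-in-$n$ bound; otherwise one only gets an exponential $e^{bn}$ factor. Iterating this for the second, third and fourth derivatives — each solving a similar equation whose forcing involves the lower derivatives and whose zeroth-order coefficient picks up extra $-2$'s from differentiating $a(x)$ twice more — produces the full set of estimates, and in particular lets us express $\Vert (U_k f)^{(3)}\Vert, \Vert (U_k f)^{(4)}\Vert$ in terms of $\Vert f^{(1)}\Vert$ and $\Vert f^{(2)}\Vert$ at the cost of the same exponential-or-linear factor. The degeneracy of $a$ at $0$ and $1$ is what makes this delicate: one must check that the higher-derivative equations do not blow up at the boundary, which works precisely because each differentiation of the diffusion coefficient supplies a compensating negative drift pushing away from the boundary — this is the heart of the Ethier–Norman method and the step I expect to spend the most care on.

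With those a priori estimates in hand, I would assemble the proof as follows. First, establish the fourth-order Taylor bound for the one-step discrepancy $(T-U)g$, being careful that the weak-selection scaling $s = s'/J$, $m = m'/J$ is exactly what makes the $O(1/J)$ and $O(1/J^2)$ terms in $Tg$ match those of $Ug$ (this uses $\frac{x(1+s_n)}{1+xs_n} = x + s x(1-x) + O(s^2)$, so $P_{x+} - P_{x-} = \tfrac{1}{J}b(x) + O(J^{-2})$ and $P_{x+} + P_{x-} = 2x(1-x) + O(J^{-1})$, matching the generator $\mathscr{L}$). Second, plug the derivative estimates on $U_k f$ into the telescoping sum; the summand is $O\!\big(J^{-3}(\Vert f^{(1)}\Vert + \Vert f^{(2)}\Vert) e^{bk}\big)$, and summing over $0 \le k \le n \le J^2 T$ gives either $\tfrac{a e^{bn}}{J}(\Vert f^{(1)}\Vert_J + \Vert f^{(2)}\Vert_J)$ in general (the geometric sum $\sum e^{bk} \asymp e^{bn}$, losing one power of $J$ against the $J^{-3}$) or, when $m' > |s'|$ and each factor is bounded by a constant, $\tfrac{a n}{J}(\Vert f^{(1)}\Vert_J + \Vert f^{(2)}\Vert_J)$. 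Third, collect the genuinely lower-order remainders (terms with an extra $J^{-1}$ from the Taylor expansion, and the discrepancy between $\Vert\cdot\Vert_J$ on the grid and $\Vert\cdot\Vert$ on $[0,1]$) into the $o(1/J)$. The constants $a$ and $b$ come out explicitly in terms of $m'$, $s'$, $p$ and the $C^4$-norms of the coefficients, with $b$ tied to $|s'| - m'$, which is exactly why the hypothesis $m' > |s'|$ removes the exponential.
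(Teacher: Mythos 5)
Your overall architecture — telescoping through the two semigroups, a one‑step Taylor comparison, and derivative bounds on the diffusion semigroup via the differentiated Kolmogorov equation and Feynman--Kac — is exactly the paper's. But there is a genuine gap in where you locate the dominant one‑step error, and it forces you into a step that is false. You claim that after cancelling the generators the one‑step discrepancy satisfies $\Vert (T-U)g\Vert_J\le CJ^{-3}\bigl(\Vert g^{(3)}\Vert_J+\Vert g^{(4)}\Vert_J\bigr)$ plus lower order, and that the ``real work'' is then to bound $\Vert (U_kf)^{(3)}\Vert$ and $\Vert (U_kf)^{(4)}\Vert$ by $\Vert f^{(1)}\Vert+\Vert f^{(2)}\Vert$. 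That last estimate cannot hold: at $k=0$ it would bound $\Vert f^{(3)}\Vert$ by $\Vert f^{(1)}\Vert+\Vert f^{(2)}\Vert$, and the hierarchy of differentiated backward equations only ever controls $\Vert (T_tf)^{(j)}\Vert$ in terms of $\Vert f^{(j)}\Vert$ plus \emph{lower} derivatives (the forcing in the equation for $\phi^{(j)}$ is $\psi_j\phi^{(j-1)}$, so control propagates downward in derivative order, never upward). There is no quantitative smoothing of the degenerate Wright--Fisher semigroup, uniform down to $t=0$, that upgrades two derivatives to four.

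The fix — and what the paper actually does — is that the $O(J^{-3})$ part of the one‑step error multiplies $g^{(1)}$ and $g^{(2)}$, not $g^{(3)}$ and $g^{(4)}$. The first two moments of the Moran increment, e.g.\ $\mathbb{E}_x[X_1-x]=\frac{sx(1-x)(1-m)}{J(1+sx)}+\frac{m(p-x)}{J}$, do \emph{not} coincide with the coefficients of $L$; under the weak scaling $s=s'/J$, $m=m'/J$ the mismatch is $O(J^{-3})$, and this mismatch (the $\gamma^J_1,\gamma^J_2$ of the paper's first lemma) is the leading error. The terms genuinely involving $g^{(3)}$ and $g^{(4)}$ — the third centered moment of the Moran step and the $L^2g$ remainder of the time‑Taylor expansion of $T_1$ — are $O(J^{-4})$, and after summing $n\le J^2T$ terms they land in the $o(1/J)$, needing only the crude bound $\Vert (T_kf)^{(j)}\Vert\le e^{c_jT}\sum_i a_{i,j}\Vert f^{(j)}\Vert$ for $j=3,4$, which is available since $f\in C^4$. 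With the corrected decomposition you only need the Feynman--Kac derivative bounds for $j=1,2$, which is precisely your second ingredient and is sound; note only that for $j=1$ the relevant zeroth‑order coefficient is $b'(x)=(s'(1-2x)-m')/J^2$ with no $a''$ contribution (the $-2$'s enter from $j=2$ on), and $m'>|s'|$ is exactly what makes $\sup_{x\in[0,1]}(s'(1-2x)-m')=|s'|-m'<0$, giving the contraction for $\phi^{(1)}$ and hence the linear‑in‑$n$ bound.
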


\begin{remark}
By considering $s=0$ then $b=0$ and we find back the uniform in time approximation diffusion with speed $J$. Our method of proof, requiring the control of some Feynman-Kac formula based on the limiting process, seems limited to give non uniform in time result. Our hope is that we may get weaker conditions than $m>|s|$ to get linear in time estimates. Another possibility is to mix these dependance in time approximation with known ergodicity of the Wright-Fisher process, as in Norman \cite{norman77}.
\end{remark}
\begin{remark}
We have considered to simplify $s=\frac{s'}J$ and $m=\frac{m'}J$ but one may generalize a little bit the condition to locally bounded $s$ and $m$ such that $\lim Js<\infty$ and $\lim Jm<\infty$.
\end{remark}
\begin{remark}
Such approximation error is noticeably useful to polynomial test function $f$, so that we may for example consider the Simpson index of the Moran process, see \cite{pgj2018} for further details.
\end{remark}

\begin{remark}
The following figures show that the obtained rate $\frac1J$ is of the good order.

\begin{figure}[htbp]
\begin{minipage}[c]{.45\linewidth}
\begin{center}
\includegraphics[scale=0.35]{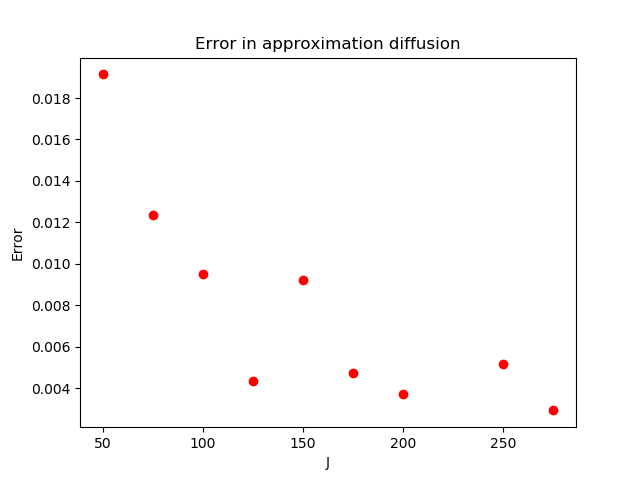}
\end{center}
\end{minipage}
\hfill
\begin{minipage}[c]{.45\linewidth}
\begin{center}
\includegraphics[scale=0.35]{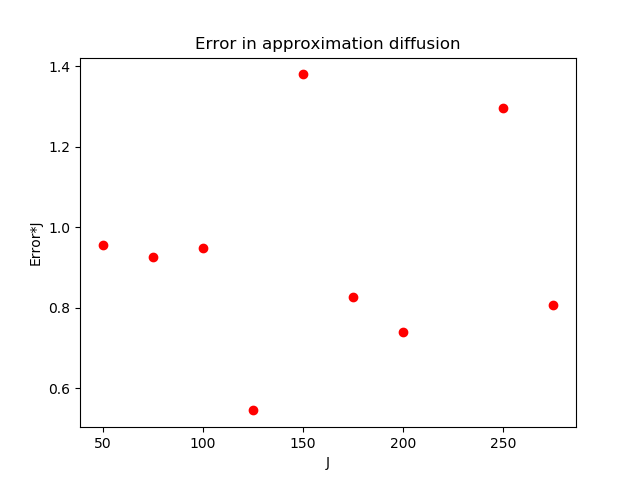}
\end{center}
\end{minipage}
\label{fig:image1}
\caption{Conditions: $s=1$, $m=0.2$, $p=0.5$, $X_0=0.7$. Left hand side : Monte Carlo estimation of the error in the approximation, using $f(x)=x$. Right hand side: same Monte Carlo estimation of the error times $J$.}
\end{figure}
It shows that our rate may be the good one.
\end{remark}

\subsection{Proof}

The proof relies on three ingredients: 
\begin{enumerate}
\item a "telescopic" decomposition of the error; \item a quantitative estimate of the error at time 1 of the approximation of the Moran process by the diffusion; \item quantitative control of the regularity of the Wright-Fisher process. \end{enumerate}
Note also that in the sequel we will not make distinction between function on ${\mathscr C}(I)$ and their restrictions on $I_J$. \\

Let $ S_n $ be defined on   $ B(I_J)$ (the space of bounded functions on $ I_J $) by :
$$ (S_nf)(x)=\mathbb{E}_x[f(X_{n}^J)] \quad \forall n \in  \mathbb{N}.$$
As is usual $S_{n}$ verifies for all $k$ in  $\mathbb{N}$ the semigroup property, namely that  $S_{n+k}=S_nS_k$.\\

Let $T_{t}$ be the operator defined on the space of bounded continuous function by :
$$ (T_tf)(x)=\mathbb{E}_x[f(Y_{t}^{J})] \quad \forall t \geq 0.$$
It also defines a semigroup $ T_{s+t}=T_sT_t $, $ \forall s \geq  0. $

Thanks to these properties, we have

\begin{align*}
S_nT_0f-S_0T_nf&=\sum_{k=0}^{n-1}S_{n-k}T_kf-S_{n-k-1}T_{k+1}f\\ 
S_nT_0f-S_0T_nf&=\sum_{k=0}^{n-1}S_{n-k-1}S_1T_kf-S_{n-k-1}T_1T_kf\\&=\sum_{k=0}^{n-1}S_{n-k-1}(S_1-T_1)T_kf 
\end{align*}

and as $ \Vert S_nf \Vert_J \leq  \Vert f \Vert_J $ by triangular inequality, we get $ \forall n \in \mathbb{N}$, $\forall f \in \mathscr{C}(I_J) $.

\begin{align}
\Vert S_nf -T_nf \Vert_J \leq \sum_{k=0}^{n-1} \Vert S_{n-k-1}(S_1-T_1)T_kf \Vert_J \leq \sum_{k=0}^{n-1} \Vert (S_1-T_1)T_kf \Vert_J .\label{equation11}
\end{align}

We have two main terms to analyze : $S_1-T_1$ for a "one-step" difference between the Moran process and the Wright-Fisher diffusion process, and $T_kf$ for which we need regularity estimates.

\subsubsection*{Control of $(S_1-T_1)$}$\,$\\
Let us first study, for $f$ regular enough , $ (S_1-T_1)f$. The main goal is to obtain the Taylor expansion of this function when $J$ is big enough.

\begin{lemma}\label{lemme1}
When $J$ is big enough, i.e. $s=s'/J>-1+\varepsilon$, there exists $K_1(\varepsilon)>0$ such that
$$ \Vert(S_1-T_1)f\Vert_{J}\leq\frac{|s'|m'+s'^2}{4\varepsilon J^3} \Vert f^{(1)} \Vert_{J} +\frac{m'p+\frac{|s'|}{4}+m'(1+|s'|)}{2\varepsilon J^3} \Vert f^{(2)} \Vert_{J}+\frac{K_{1}(\varepsilon)}{J^4}$$
\end{lemma}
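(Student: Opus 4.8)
The plan is a direct Taylor expansion of $(S_1-T_1)f$ at a generic point $x\in I_J$, comparing the one-step increment of the Moran chain with the time-$1$ increment of the generator $L$ of the diffusion $Y^J$. First I would write
\[
(S_1f)(x)-f(x)=P_{x+}\bigl(f(x+\Delta)-f(x)\bigr)+P_{x-}\bigl(f(x-\Delta)-f(x)\bigr),
\]
and Taylor-expand $f(x\pm\Delta)$ to fourth order in $\Delta=1/J$, so that
\[
(S_1f)(x)-f(x)=\Delta(P_{x+}-P_{x-})f^{(1)}(x)+\tfrac{\Delta^2}{2}(P_{x+}+P_{x-})f^{(2)}(x)+R_3(x),
\]
with $R_3$ controlled by $\Delta^3$ and the third/fourth derivatives; since $P_{x\pm}$ are probabilities bounded by $1$, the remainder is $O(1/J^3)$ with an explicit constant (this is where $f\in C^4$ and the bound $\|f^{(3)}\|,\|f^{(4)}\|$ enter, absorbed into $K_1(\varepsilon)/J^4$ after one more order — more precisely one expands to order four to see the $J^{-3}$ coefficient vanishes or is itself $O(1/J)$, leaving genuinely $K_1(\varepsilon)/J^4$). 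For $T_1$ I would use $(T_1f)(x)-f(x)=\int_0^1 (T_t Lf)(x)\,dt = Lf(x)+\int_0^1 (T_tLf-Lf)(x)\,dt$, and then again $T_tLf-Lf=\int_0^t T_u L^2 f\,du$, so that $(T_1f)(x)-f(x)=Lf(x)+\tfrac12 L^2f(x)+\dots$; since $L$ carries a prefactor $1/J$ on the drift and $1/J^2$ on the diffusion part, $Lf$ is $O(1/J^2)$ and $L^2f$ is $O(1/J^4)$, so the dominant comparison is between the $\Delta$- and $\Delta^2$-terms of $S_1$ and the single term $Lf$.

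The heart of the computation is therefore the identity for the first two "moments" of the Moran step. One computes
\[
P_{x+}-P_{x-}=m(p-x)+(1-m)\,\frac{x(1-x)s}{1+xs},\qquad
P_{x+}+P_{x-}=x(1-x)\cdot 2 + (\text{lower order}),
\]
— I would carefully expand $\frac{x(1+s)}{1+xs}=x+\frac{x(1-x)s}{1+xs}$ to make these exact. Multiplying by $\Delta=1/J$ and $\Delta^2/2=1/(2J^2)$ respectively and substituting $s=s'/J$, $m=m'/J$, the leading terms reproduce exactly $\tfrac1{J^2}x(1-x)f^{(2)}$ and $\tfrac1J[s'x(1-x)+m'(p-x)]f^{(1)}$, i.e. $Lf(x)$. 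Hence $(S_1-T_1)f(x)$ equals the collection of error terms: (i) the $1/J^3$ correction in the drift coming from $\frac{s}{1+xs}=s-s^2x+O(s^3)$ and from $(1-m)$ versus $1$, which produces a term $\le \frac{|s'|m'+s'^2}{J^3}\cdot\frac14\|f^{(1)}\|_J$ after bounding $x(1-x)\le 1/4$ and $\frac1{1+xs}\le\frac1\varepsilon$; (ii) the $1/J^3$ correction to the second-moment term, giving the $f^{(2)}$-coefficient with the $m'p$, $|s'|/4$ and $m'(1+|s'|)$ pieces, again via $x(1-x)\le1/4$ and the $1/\varepsilon$ bound on $1/(1+xs)$; and (iii) everything of order $1/J^4$ or smaller — the Taylor remainder $R_3$, the $L^2f$ term, and the cross terms — lumped into $K_1(\varepsilon)/J^4$.

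The main obstacle is bookkeeping rather than conceptual: one must track precisely which sub-terms land at order $1/J^3$ (and hence contribute to the stated explicit constants) versus order $1/J^4$, keeping the dependence on $\varepsilon$ only through the factor $1/(1+xs)\le 1/\varepsilon$ on the event that matters near $x$ close to where $1+xs$ is small, i.e. $s<0$. Concretely I would fix $\varepsilon>0$, use $s=s'/J>-1+\varepsilon$ so that $1+xs\ge 1+s\ge\varepsilon$ uniformly in $x\in[0,1]$, expand $\frac{1}{1+xs}=1-xs+\frac{(xs)^2}{1+xs}$, and substitute throughout; the coefficient of $\Delta f^{(1)}$ in $S_1f-f-Lf$ is then $-(1-m)x^2(1-x)s^2\cdot\frac1{1+xs}-m\cdot(\text{drift already matched})$ plus genuinely higher order, whose absolute value is at most $\frac{s^2}{\varepsilon}\cdot\frac14 = \frac{s'^2}{4\varepsilon J^2}$, and multiplied by $\Delta=1/J$ gives $\frac{s'^2}{4\varepsilon J^3}$; adding the $|s'|m'/ (4\varepsilon J^3)$ coming from the $(1-m)$-versus-$1$ discrepancy on the selection drift yields the claimed $f^{(1)}$-bound, and the analogous (slightly longer) expansion of the $\Delta^2$-term yields the $f^{(2)}$-bound. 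Taking $\max_{x\in I_J}$ and collecting remainders finishes the proof.
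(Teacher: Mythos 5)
Your proposal is correct and follows essentially the same route as the paper: a spatial Taylor expansion of $S_1f$ (your use of $P_{x\pm}$ directly is just a rewriting of the centered moments $\mathbb{E}_x[(X_1-x)^k]=\Delta^k$ times signed sums of $P_{x\pm}$), a temporal expansion $(T_1f)(x)=f(x)+Lf(x)+O(J^{-4})$ via the backward equation, and the identification of the residual $1/J^3$ terms bounded using $x(1-x)\le 1/4$ and $1+sx\ge\varepsilon$. Your observation that one must expand to fourth order so that the third-order Taylor term inherits the $O(1/J)$ factor from $P_{x+}-P_{x-}$ is exactly the reason the paper requires $f\in\mathscr{C}^4$ and obtains a genuine $K_1(\varepsilon)/J^4$ remainder.
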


\begin{proof}
Let us begin by consideration on the Wright-Fisher diffusion process. Remark first, as usual for this diffusion process
$$\lim_{t \rightarrow 0} \left\|  \frac{T_tf-f}{t}- Lf \right\|_J=0 ,\forall f \in \mathscr{C}^2(I).$$
The Chapman-Kolmogorov backward equation reads
$$\frac{\partial}{\partial t}(T_tf)(x)=L(T_tf)(x)=T_t(Lf)(x)$$
and more generally if $f$ is enough regular, for $j$ in $\mathbb{N}$ it is possible to define $L^j$ as:
$$
\frac{\partial^{j}}{\partial t^j}(T_tf)(x)=(T_tL^jf)(x),\forall x \in I, t \geq 0.$$
For this proof, we only need to go to the fourth order in $j$. So let $f\in\mathscr{C}^4(I)$ (possibly depending on $J$),  using Taylor theorem for $ (T_1f)(x) $ there exists $w_{2}$, independent of $J$,  such as :
\begin{eqnarray} 
(T_1f)(x)&=&(T_0f)(x)+(T_0 f)^{(1)}(x)(1-0)+w_{2}\frac{1}{2!}(T_0f)^{(2)}(x)\nonumber\\ 
&=&f(x)+(L^{1}f)(x)+\frac{w_{2}}{2}(L^{2} f)(x)\label{TaylorT} 
\end{eqnarray}
By direct calculations, we have for the successive $L^j$
\begin{align*}
L^{1}f(x)&=\frac{x(1-x)}{J^2}f^{(2)}(x)+\frac{sx(1-x)+m(p-x)}{J}f^{(1)}(x)\\
L^{2}f(x)&=\frac{(x(1-x))^2}{J^4}f^{(4)}(x)\\
&+ \Big[ \frac{2(1-2x)x(1-x)}{J^4}+\frac{2x(1-x)(sx(1-x)+m(p-x))}{J^3}\Big]f^{(3)}(x)\\
&+ \Big[   \frac{-2x(1-x)}{J^4} \\
&+\frac{2x(1-x)(s(1-2x)-m)+(1-2x)(sx(1-x)+m(p-x))}{J^3}\\
&+\frac{(sx(1-x)+m(p-x))^2}{J^2}\Big]f^{(2)}(x) + \Big[\frac{-2sx(1-x)}{J^3}\\
&+\frac{(s(1-2x)-m)(sx(1-x)+m(p-x))}{J^2}\Big]f^{(1)}(x)\\
\end{align*}
where $f^{(i)}$ the $i^{th}$ derivative of $f$. Remark now that by our assumption on the boundedness of the successive derivatives of $f$ that there exists $K_0$ (depending also on $m',p,s'$)
$$\|L^2f\|_J\le\frac{K_{0}}{J^4}$$
Thus, in the following this term could be neglected.

 Let us now look at the Moran process and so get estimates on $S_1$. The quantity $X_1-x$ is at least of the order of $\frac{1}{J}$ and when $J$ goes to infinity, goes to $0$.  So using Taylor's theorem, there exists $|w_{1}|<1$ such that :
$$f(X_1)=f(x) +f^{(1)}(x)(X_1-x)+\frac{f^{(2)}(x)}{2}\big(X_1-x\big)^{2}+\frac{w_1}{3!}(X_1-x)^3 \Vert f^{(3)} \Vert_{J} $$
and thus
 \begin{equation}
 \begin{aligned}
(S_1f)(x)&=\mathbb{E}_x[f(X_1)]\\
&=f(x) +f^{(1)}(x)\mathbb{E}_x[X_1-x]\label{TaylorS}+ \frac{1}{2!}\mathbb{E}_x[(X_1-x)^2]  f^{(2)}(x)\\  
&\quad+\frac{w_1}{3!}\mathbb{E}_x[(X_1-x)^3] \Vert f^{(3)} \Vert_{J}  
\end{aligned} 
 \end{equation}
Direct estimates (even if tedious) on the centered moments of the Moran process give
\begin{align}
&\mathbb{E}_x[X_1-x]=\frac{sx(1-x)(1-m)}{J(1+sx)}+\frac{m(p-x)}{J} \label{esperance1}\\ 
&\mathbb{E}_x[(X_1-x)^2]=\frac{1}{J^2} \Big (mp(1-2x)+\frac{(1-m)(1+s)x(1-2x)}{1+sx}+x\Big) \\
&\mathbb{E}_x[(X_1-x)^3]<K_{3}\frac{1}{J^4}
\end{align}
where $K_{3}$ is a constant (independent of $J$).

We may then consider $(S_1-T_1)f$ through \eqref{TaylorS} and \eqref{TaylorT}  so that there exists a constant $K_{1}$  such as :
\begin{align}   
(S_1f)(x)-(T_1f)(x)=&f^{(1)}(x)\mathbb{E}_x[(X_1-x)]+\frac{1}{2}\mathbb{E}_x[(X_1-x)^2]  f^{(2)} (x)  \nonumber\\
+&\frac{w_1}{6}\mathbb{E}_x[(X_1-x)^3] \Vert f^{(3)} \Vert_{J} 
-\Big( (L^1f)(x)+\frac{w_{2}}{2}(L^2f)(x) \Big) \nonumber \\
=&\gamma^J_1f^{(1)} (x) +\gamma^J_2 f^{(2)} (x)+\frac{K_{1}}{J^4} \label{gamma}
\end{align}
with
\begin{align*}\nonumber
\gamma^J_{1}&=\frac{-sx(1-x)(m+sx)}{J(1+sx)}\\
\gamma^J_2&=\frac{1}{2J^2}\left [mp(1-2x) +\frac{(1-m)(1+s)x(1-2x)}{1+sx}
-x(1-2x) \right]\\
&=\frac{1}{2J^2}\left [mp(1-2x) +x(1-2x)\frac{s(1-x)-m(1+s)}{1+sx} \right]
\end{align*}
As selection and immigration are weak, we easily conclude that $\gamma^J_{1} $ and $ \gamma^J_{2}$ are at most of the order of $\frac{1}{J^3}$.
\end{proof}

\subsubsection*{Regularity estimates on $T_t$}$\,$\\
We have now to prove regularity estimates on $T_t$. By \cite[Th.1]{ethier76}, we have that $ T_t:\mathscr{C}^{j}(I)\rightarrow \mathscr{C}^j(I) $ for all $j$. Assume for now that  $f \in \mathscr{C}^4(I) $ and $ \forall j\in\{1,2\}$, 
$\forall k\leqslant j$, there are $ c_j$ and $a_{k,j}$ $\in\mathbb{R}^{+}$ independent of $J$ such that:
 \begin{equation}\label{crucest}
   \Vert (T_tf)^{(j)} \Vert_J \leq  e^{c_j\frac{t}{J^2}}\,\sum\limits_{i=1}^{j} a_{i,j}\Vert f^{(j)} \Vert_J \end{equation}
   with $c_{j}=\sup\limits_{x\in[0,1]}|j(j-1)-Js(1-2x)-Jm | $.
   
Let us see how to conclude if \eqref{crucest} is verified. First it exists a continuous (time dependent) function $\Tilde{R}_{j}$ independent of $J$ such that:
\begin{align*}
\Vert S_nf-T_nf \Vert_{J} &\leq \sum_{k=0}^{n-1}(\Vert (S_1-T_1)T_kf \Vert_{J} \\
&\leq \sum_{k=0}^{n-1}\left(\Vert\gamma_1\Vert \Vert (T_kf)^{(1)} \Vert_{J} +\Vert\gamma_2\Vert \Vert (T_kf)^{(2)} \Vert_{J}+O\left(\frac{1}{J^4}\right)\right)  \\
& \leq \sum_{j=1}^{2}\Vert\gamma^J_j \Vert \sum\limits_{i=1}^{j} a_{i,j}\Vert f^{(j)} \Vert_{J}\sum_{k=0}^{n-1}\exp(c_j\frac{k}{J^2})+O\left(\frac{1}{J^2}\right) \\
&\leq  \sum_{j=1}^{2}\Vert\gamma^J_j\Vert \times\Tilde{R}_{j}J^2 \times\Vert f^{(j)} \Vert_{J}+O\left(\frac{1}{J^2}\right )  \underset{J\to+\infty}{\longrightarrow}0
\end{align*}because if $J$ is big enough,
\begin{align*}
\sum\limits_{i=1}^{j} a_{i,j}\sum_{k=0}^{n-1}\exp\left(c_j\frac{k}{J^2}\right) =\sum\limits_{i=1}^{j} a_{i,j}\frac{1-\exp(c_{j}\frac{n}{J^2})}{1-\exp(\frac{c_j}{J^2})}\leq J^2\Tilde{R}_{j}(t)
\end{align*}
and $\Tilde{R}_{j}$ is independent of $J$ because $n$ is  of the order of $J^2$.
And so with $q(t)=\max\limits_{j\in{1,2}}\Big(\Vert\gamma_j\Vert  \Tilde{R}_{j}J^3 \Big) $, we obtain the result:
$$\Vert S_nf-T_nf \Vert_{J} \leq \frac{q(t) }{J}\Big(\Vert f^{(1)} \Vert_{J}+\Vert f^{(2)} \Vert_{J}\Big)+O\left(\frac{1}{J^2}\right ).$$
This concludes the proof in the first case. Indeed, we easily see that the function $\Tilde{R}_j(t)$ is exponential in time in the general case. We will see later how, when some additional conditions are added on $m'$ and $s'$, one may obtain a linear in time function.\\
We will now prove the crucial \eqref{crucest}. It will be done through the following proposition.
\begin{prop}\label{prop1}
Let $ \phi(t,x)=(T_tf)(x) $,  $ x \in I_J$ and $t \geq 0 $.
Assume $ f\in \mathscr{C}^{j+2}(I) $ then  $ \phi(t,x) \in \mathscr{C}^{j+2}(I) $  and  for $  j\in \mathbb{N}$, $\forall k\leqslant j$, there are $ {c}_j$and $a_{k,j}$ $\in\mathbb{R}$ independent of $J$ such as  $ \Vert  \phi(t,x)^{(j)} \Vert \leq  \exp({c}_j\frac{t}{J^2} )\sum\limits_{k=1}^{j} a_{k,j}\Vert f^{(j)} \Vert $\\
\end{prop}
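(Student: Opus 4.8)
The plan is to turn this statement about $\phi^{(j)}=(T_tf)^{(j)}$ into a statement about a parabolic equation: differentiating the backward equation $\partial_t\phi=L\phi$ exactly $j$ times in the space variable produces, via the Leibniz rule, an evolution equation for $\phi^{(j)}$ driven again by a Wright--Fisher-type generator, perturbed by a bounded zeroth-order potential and by a source term of one order lower. The factor $e^{c_jt/J^2}$ will come from a Feynman--Kac bound on that potential, and the source term will be absorbed by an induction on $j$. The only non-elementary ingredient is regularity up to the boundary: since $L$ degenerates at $\{0,1\}$, we must know that $T_t$ preserves $\mathscr{C}^{j+2}(I)$ so that $\partial_x^j$ may be commuted with $\partial_t$ and with $L$; this is exactly \cite[Th.~1]{ethier76}, which also yields $\phi(t,\cdot)\in\mathscr{C}^{j+2}(I)$, and the whole argument rests on it.

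Concretely, write $L=\tfrac1{J^2}a\,\partial_x^2+\tfrac1J b\,\partial_x$ with $a(x)=x(1-x)$, $b(x)=sx(1-x)+m(p-x)$, and note that $a,b$ are polynomials of degree $\le2$, so $a''\equiv-2$, $b''\equiv-2s$ and $a'''=b'''=0$. Differentiating $\partial_t\phi=L\phi$ exactly $j$ times then gives
\begin{equation*}
\partial_t\phi^{(j)}=\underbrace{\tfrac1{J^2}a\,(\phi^{(j)})''+\Big(\tfrac{j}{J^2}a'+\tfrac1J b\Big)(\phi^{(j)})'}_{=:\,L_j\phi^{(j)}}+v_j(x)\,\phi^{(j)}+\tfrac1J\binom{j}{2}b''\,\phi^{(j-1)},
\end{equation*}
with $v_j(x)=\tfrac1{J^2}\binom{j}{2}a''+\tfrac jJ b'(x)$ bounded uniformly in $J$ by a constant $c_j$ of the type in the statement. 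Equivalently and more transparently, one may iterate the one-step identity $\partial_x(\mathscr{L}\phi)=\widetilde{\mathscr{L}}(\phi')+\big(s'(1-2x)-m'\big)\phi'$ for the rescaled Wright--Fisher generator, where $\widetilde{\mathscr{L}}$ is the same operator with the immigration parameter raised by $2$: each differentiation shifts the immigration parameter up and releases an explicit bounded potential. The key structural point is that $L_j$ is still a genuine diffusion generator on $I$: its diffusion coefficient $a/J^2$ vanishes at $0$ and $1$, and its drift $\tfrac{j}{J^2}(1-2x)+\tfrac1J b(x)$ is $>0$ at $x=0$ and $<0$ at $x=1$, hence points strictly inward, so $L_j$ generates a Feller semigroup with $L_j\mathbf{1}=0$.

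Finally I would represent $\phi^{(j)}$ by a Feynman--Kac/Duhamel formula based on $L_j$. Writing $\xi^{(j)}$ for the diffusion with generator $L_j$ and $U^{(j)}_th(x)=\mathbb{E}_x\big[h(\xi^{(j)}_t)\exp(\int_0^tv_j(\xi^{(j)}_u)\,du)\big]$, the identity $L_j\mathbf{1}=0$ makes $\xi^{(j)}$ a true Markov process, so $\|U^{(j)}_th\|_J\le\|h\|_J\,e^{t\sup_x v_j}\le\|h\|_J\,e^{c_jt/J^2}$. Duhamel's formula for the displayed equation reads
\begin{equation*}
\phi^{(j)}(t,\cdot)=U^{(j)}_tf^{(j)}+\tfrac1J\binom{j}{2}b''\int_0^tU^{(j)}_{t-u}\big(\phi^{(j-1)}(u,\cdot)\big)\,du,
\end{equation*}
and on taking $\|\cdot\|_J$, using $|b''|=2|s'|/J$ and the induction hypothesis $\|\phi^{(j-1)}(u,\cdot)\|_J\le e^{c_{j-1}u/J^2}\sum_{k\le j-1}a_{k,j-1}\|f^{(k)}\|_J$, the integral is bounded by $\tfrac{J^2}{c_j-c_{j-1}}e^{c_jt/J^2}$ (the exponents being ordered since $c_j$ increases with $j$), the prefactor $j(j-1)|s'|/(c_j-c_{j-1})$ folding into the new $a_{k,j}$. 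This closes the induction, whose base case $j=0$ is the contraction $\|T_tf\|_J\le\|f\|_J$; the cases $j=1,2$ give \eqref{crucest}. The main obstacle is the first step --- the boundary regularity of $T_t$ that legitimizes differentiating through the degeneracy, i.e.\ the invocation of \cite{ethier76}; a secondary, bookkeeping, difficulty is verifying the inward sign of the drift of $L_j$ (which is what makes the Feynman--Kac representation and the contraction bound on $U^{(j)}_t$ available) and then propagating the constants $a_{k,j}$ through the induction.
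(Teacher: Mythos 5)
Your proposal is correct and follows essentially the same route as the paper: differentiating the backward equation $j$ times to get $\partial_t\phi^{(j)}=L_j\phi^{(j)}+v_j\phi^{(j)}+\psi_j\phi^{(j-1)}$ (your $L_j$, $v_j=-\nu_j$ and $\frac1J\binom{j}{2}b''=\psi_j$ coincide exactly with the paper's Lemma~\ref{lemme2}), then a Feynman--Kac/Duhamel representation driven by $Y^j$, an induction on $j$, and the same appeals to \cite{ethier76} for boundary regularity and to the inward-pointing drift of $L_j$ for well-posedness. The only slight wrinkle is your claim that $c_j$ increases in $j$ (needed for the $\frac{J^2}{c_j-c_{j-1}}$ bound), which can fail for particular $m',s'$; it is harmlessly repaired by replacing $c_j$ with $\max_{i\le j}c_i$, and the paper's own handling of that integral is no tighter.
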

\begin{proof}
First remark that  the Chapman-Kolmogorov backward equation may be written :
$$\frac{\partial}{\partial t}\phi=L\phi, \qquad\qquad
 \phi(0,.)=f.$$
The following lemma gives the equations verified by $\frac{\partial}{\partial t}\phi^{(j)}$:
\begin{lemma}
\label{lemme2}
Let  $ \phi^{(j)}$ be the $j^{th}$derivative of  $\phi $  with respect to $x$ then we get:
$$\frac{\partial}{\partial t}\phi^{(j)}=L_j\phi^{(j)}-\nu_j\phi^{(j)}+\psi_{j}\phi^{(j-1)},\qquad\qquad \phi^{(j)}(0,.)=f^{(j)}$$
where 
\begin{align*}
&L_j\phi^{(j)}(x)=L\phi^{(j)}+j\frac{1-2x}{J^2}\phi^{(j+1)}\\
&\nu_j(x)=\left(\frac{j(j-1)}{J^2}-j\frac{s(1-2x)-m}{J}\right)\\\
&\psi_{j}=\frac{-sj(j-1)}{J}.
\end{align*}
\end{lemma}
Let us remark that there are two new terms when there is selection in Moran processes, i.e. $\psi_j$ which will lead to the dependence in time of our estimates handled via Feynman-Kac formula, and one in $\nu_j$ which will be the key to the condition to get only linear in time dependence.
\begin{proof}
\quad\\
A simple recurrence is sufficient to prove this result, for simplicity let us only look at the case  j=1 
\begin{align*}
\frac{\partial}{\partial t}\phi^{(1)}&=\frac{\partial}{\partial x}\left(\frac{\partial}{\partial t}\phi\right)=\frac{\partial}{\partial x}L\phi \\
&=\frac{\partial}{\partial x} \Big(\frac{x(1-x)}{J^2}\phi^{(2)}+\frac{sx(1-x)+m(p-x)}{J}\phi^{(1)}                    \Big) \\
&=\frac{x(1-x)}{J^2}\phi^{(3)}+\frac{1}{J}(sx(1-x)+m(p-x))\phi^{(2)}+\frac{1-2x}{J^2}\phi^{(2)}\\
&\qquad+(s(1-2x)-m)\frac{1}{J}\phi^{(1)}\\
&= (L+\frac{1-2x}{J^2}\frac{\partial}{\partial x })\phi^{(1)}+\frac{s(1-2x)-m}{J} \phi^{(1)}\\
&=L_1\phi^{(1)}+\frac{s(1-2x)-m}{J}\phi^{(1)}.
\end{align*}
With $L_1\phi^{(1)}=L\phi^{(1)}+\frac{1-2x}{J^2}\frac{\partial\phi^{(1)}}{\partial x }$, we find the good initial coefficients.
\end{proof}
Let us now use the  Feymann-Kac formula to get ,
\begin{equation}
\begin{aligned}
\phi^{(j)}(t,x)&=\mathbb{E}_x\left[f^{(j)}(Y^{j}_t)\exp\left(-\int_{0}^{t}\frac{j(j-1)}{J^2}+\frac{m-s(1-2Y^{j}_{u})}{J}du\right)\right. \\ \nonumber
&\qquad-\left.\int\limits_{0}^{t}\frac{sj(j-1)}{J}\phi^{(j-1)}(Y^{j}_{h})e^{-\int_{0}^{h}\frac{j(j-1)}{J^2}+\frac{m-s(1-2Y^{j}_{u})}{J}du}dh\right] \label{feykack}
\end{aligned}
\end{equation}
with $Y^{j}_t$ the process having $L_{j}$ for generator. Then  look first at $j=1$. As we are in weak selection and weak immigration, 
\begin{align*}
\Vert\phi^{(1)} (x,t)\Vert &\leq \mathbb{E}_x\left[\Vert f^{(1)} (\tilde{Y}_t)\Vert\exp\left(\frac{t}{J^2} \sup\limits_{x\in [0,1]}J(m-s(1-2x)\right) \right]\\
&\leq \Vert f^{(1)} \Vert \exp\left(\frac{t}{J^2} \lambda_{1}\right)
\end{align*}
where $\lambda_{1}=\sup\limits_{x\in [0,1]}J(m-s(1-2x))= m'+|s'|$ is independent of $J$ . The case $j=1$ is proved. \\

We will then prove the result by recurrence: suppose true this hypothesis until $j=j-1$.\\
For $j>1$, denote $c_{j}=\sup\limits_{x\in[0,1]}|J^2\nu_{j}(x)|$,  and remark that $c_{j}$ is no equal to zero and is independent of $J$ because the selection and immigration are weak.  Thus
\begin{align*}
\Vert\phi^{(j)}(t,x)\Vert&\leq\mathbb{E}_x\Big[| f^{(j)}(Y^{j}_t)| e^{c_{j}\frac{t}{J^2}}+\int\limits_{0}^{t}\frac{sj(j-1)}{J}\Vert \phi^{(j-1)}(Y^{j}_{h})\Vert e^{\frac{hc_{j}}{J^2}}dh\Big]\\
&\leqslant \Vert f^{(j)}(x)\Vert e^{c_{j}\frac{t}{J^2}}+\frac{sj(j-1)}{J}\Vert \phi^{(j-1)}(x)\Vert_{\infty} \int\limits_{0}^{t}e^{\frac{hc_{j}}{J^2}}dh\\
&\leqslant \Vert f^{(j)}(x)\Vert e^{c_{j}\frac{t}{J^2}}+\frac{sj(j-1)}{J}\Vert \phi^{(j-1)}(x)\Vert_{\infty} \frac{J^2}{c_{j}}\big(e^{\frac{tc_{j}}{J^2}}-1\big)\\
&\leqslant      \Vert f^{(j)}(x)\Vert e^{c_{j}\frac{t}{J^2}}\\
&\qquad + \exp({c}_{j-1}\frac{t}{J^2} )\sum\limits_{k=1}^{j-1} a_{k,j-1}\Vert f^{(j)} \Vert \frac{Jsj(j-1)}{c_{j}}\Big(e^{\frac{tc_{j}}{J^2}}-1\Big)\\
&\leqslant e^{\frac{tc_{j}}{J^2}}\left(  \Vert f^{(j)}(x)\Vert+  e^{{c}_{j-1}\frac{t}{J^2} }\sum\limits_{k=1}^{j-1} a_{k,j-1}\Vert f^{(j)} \Vert \frac{Jsj(j-1)}{c_{j}}\right)\\
&\leqslant e^{\frac{t{c}_{j}}{J^2}}\left( \sum\limits_{k=1}^{j} a_{k,j}\Vert f^{(j)} \Vert\right).
\end{align*}
The $a_{k,j}$ do not depend on $J$, because  $\frac{Jsj(j-1)}{c_{j}}$ , the $a_{k,j-1}$ and $\exp(\lambda_j\frac{t}{J^2} )$ can be bounded independently of $J$.\\

To conclude we have to justify that  $c_j$ is finite for all $j$.  For it we just need to note  that the processes $Y_{t}^{j}$ are bounded by $0$ and $1$ for all $j$. \\
This is partly due to the fact that their generator $L_j\phi^{(j)}(x)=L\phi^{(j)}+j\frac{1-2x}{J^2}\phi^{(j+1)}$ has a negative drift at the neighbourhood of $1$ and a positive at the  neighbourhood of $0$, see Feller\cite{Feller2}. This argument completes the proof.
\end{proof}

Let us now consider the case where $m>|s|$, we will show in this case that we obtain a  linear in time dependance rather than an exponential one. Then, in the equation \eqref{feykack} we can use the following:
 \begin{align*}
\Vert\phi^{(1)} (x,t)\Vert \leq\Vert f^{(1)} \Vert \exp(-\frac{t}{J^2} \lambda_{1}) \\
\Vert\phi^{(2)} (x,t)\Vert \leq c_{1}\left(\Vert f^{(1)} \Vert+\Vert f^{(2)} \Vert\right) 
 \end{align*}where $c_{1}$ is a constant independent of time.
 And then,
 \begin{align*}
\Vert S_nf-T_nf \Vert &\leq \sum_{k=0}^{n-1}\Vert (S_1-T_1)T_kf \Vert \\
&\leq \sum_{k=0}^{n-1}\left(\|\gamma_1^{J}\| \Vert (T_kf)^{(1)} \Vert +\|\gamma_2^{J} \|\Vert (T_kf)^{(2)} \Vert+O\left(\frac{1}{J^4}\right)\right)  \\
&\leq \| \gamma_1^{J}\|\, \Vert f^{(1)} \Vert \sum_{k=0}^{n-1}exp(-\frac{k}{J^2}\times \lambda_{1})+\|\gamma_2^{J} \|\,c_{1}\left(\Vert f^{(1)} \Vert+\Vert f^{(2)} \Vert\right) n \\
& +O\left(\frac{1}{J^2}\right) \\
&\leq \max(\|\gamma^J_{1}\|,\|\gamma_{2}^J\|)\times J^2c(t+1)\left(\Vert f^{(1)} \Vert+\Vert f^{(2)} \Vert\right)+O\left(\frac{1}{J^2}\right) 
\end{align*}because if $J$ is big enough,
\begin{align*}
\sum_{k=0}^{n-1}\exp(-\frac{k}{J^2}\times \lambda_{1})=\frac{1-\exp(-\frac{n}{J^2}\times \lambda_{1})}{1-\exp(-\frac{\lambda_{1}}{J^2})}\leq c_{2}J^2
\end{align*}
and $c=max(c_{1},c_{2})$ is independent of $J$ and independent of time.

\section{Random limiting selection as a pure jump process}

To simplify, we will consider a constant immigration, in order to see where the main difficulty arises. The results would readily apply also to this case.\\
Let us now assume that $s$ is no longer a constant but a Markovian jump process $(s_{n})_{n\in \mathbb{N}}$ with homogeneous transition probability $(P_{x,y})$.
 We are in the weak selection case so $s_{n}$ is still of the order of $\frac{1}{J}$ and takes values in a {\it finite} space $E$.\\
Assume furthermore 
\begin{equation}
P^{J}_{s,s'}\times J^2 \underset{J\to+\infty}{\longrightarrow}\alpha_{s}Q_{s,s'}\qquad\qquad\forall s \neq s' \label{assumption1}.
\end{equation}
As in the previous section, $(X_{n})_{n}$ is the Moran process, but with a Markovian selection and
$(X_{n})_{n}$ takes values in $I_{J}$. Finally denote $\tilde{Z}_{n}=(X_n,s_n)$. Consider now the processes $Z_{t}$ tacking values in $\mathscr{I}=[0.1]\times E$ defined by the following generator:
\begin{align*}
 L_{x,s}f(x,s)=&\frac{1}{J^{2}}x(1-x)\frac{{\partial}^2}{\partial{x}^2}f(x,s)+ \frac{1}{J}[ sx(1-x)+m(p-x)]\frac{\partial}{\partial{x}}f(x,s)\\
 &+\sum\limits_{s'\in E}\frac{\alpha(s)Q_{s,s'}}{J^2}\big(f(x,s')-f(x,s)\big)\quad  \forall f \in \mathscr{C}²(\mathscr{I})
 \end{align*}

Its first coordinate is the process $Y_{t}$ having the same generator as in the first part and the second is $S_{t}$ the Markovian jump process having $(Q_{s',s})_{s,s'\in E}$ for generator and $\frac{\alpha}{J^2}$ for transition rates.\\
As in the previous part we want to quantify the convergence of $\tilde{Z}_{n}$ towards $Z_{n}$ in law, when $J$ goes to infinity.
So the following theorem gives an estimation of the order of convergence of $ E[f(\tilde{Z}_{n})-f(Z_{n})]$ for $ f \in \mathscr{C}²(\mathscr{I})$.
\begin{theorem}
Let denote  $T_{t}f(x,s)=E_{x,s}[f(Z_{t})]$ and assume $\forall s$ and $\forall f \in \mathscr{C}²(\mathscr{I})$, $T_{t}f(.,s)$ is in $\mathscr{C}²(\mathscr{I})$ .
Let $f \in C^{4}(\mathscr{I}) $  then it exists a function $\Tilde{\Gamma}$ at most exponential in time and a function $k_0$ linear in time which verifies when $J$ goes to infinity: there exists $\Tilde\Gamma$, $k_0$ such that
\begin{eqnarray*}
\Vert S_nf-T_nf \Vert_{J} &\le&\frac{\Tilde{\Gamma}}{J}\Big(\Vert f^{(1)} \Vert_{J}+\Vert f^{(2)} \Vert_{J}\Big)+k_0\max\limits_{s,s' \in E}\Big|J^2P_{s,s'}-\alpha_{s}Q_{s,s'}\Big|\Vert f\Vert_{J}\\
&&+O(\frac{1}{J^2})
\end{eqnarray*}

\end{theorem}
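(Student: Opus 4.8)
The plan is to mimic, on the enlarged state space $\mathscr{I}=[0,1]\times E$, the three–step proof of Theorem 1: a telescopic decomposition, a one–step estimate $(S_1-T_1)$, and $x$–regularity estimates on $T_t$. Writing $(S_nf)(x,s)=\mathbb{E}_{x,s}[f(\tilde Z_n)]$ and $(T_tf)(x,s)=\mathbb{E}_{x,s}[f(Z_t)]$, both are contraction semigroups on $B(I_J\times E)$, so the computation leading to \eqref{equation11} still gives
$$\Vert S_nf-T_nf\Vert_J\le\sum_{k=0}^{n-1}\Vert (S_1-T_1)T_kf\Vert_J,$$
and it remains to bound the one–step error and to control the first two $x$–derivatives of $T_kf$.

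For the one–step estimate I would condition on the selection move. Since $\sum_{s'}P^J_{s,s'}=1$, denoting by $\mathbb{E}^{(s)}_x$ the Moran move with frozen selection parameter $s$,
$$(S_1f)(x,s)=\mathbb{E}^{(s)}_x[f(X_1,s)]+\sum_{s'\neq s}P^J_{s,s'}\,\mathbb{E}^{(s)}_x\big[f(X_1,s')-f(X_1,s)\big].$$
The first term is exactly the object of Lemma \ref{lemme1} with $s$ frozen, producing the $\gamma_1^Jf^{(1)}+\gamma_2^Jf^{(2)}+O(J^{-4})$ of \eqref{gamma}, of order $J^{-3}$. In the second term $X_1=x+O(J^{-1})$ and, by \eqref{assumption1}, $P^J_{s,s'}=O(J^{-2})$, so a first–order Taylor expansion in $x$ replaces $\mathbb{E}^{(s)}_x[f(X_1,s')-f(X_1,s)]$ by $f(x,s')-f(x,s)$ up to $O(J^{-1})\Vert f^{(1)}\Vert_J$, i.e. up to $O(J^{-4})$ after multiplication by $P^J_{s,s'}$. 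Subtracting the jump part $\tfrac1{J^2}\sum_{s'}\alpha_sQ_{s,s'}(f(x,s')-f(x,s))$ of $L_{x,s}$, and noting that $L_{x,s}^2f$ is again $O(J^{-4})$ (the extra jump contributions being at worst $J^{-4}\Vert f\Vert_J$), one is left with $\sum_{s'\neq s}\big(P^J_{s,s'}-\alpha_sQ_{s,s'}/J^2\big)(f(x,s')-f(x,s))$, whence, since $E$ is finite,
$$\Vert(S_1-T_1)f\Vert_J\le \Vert\gamma_1^J\Vert\,\Vert f^{(1)}\Vert_J+\Vert\gamma_2^J\Vert\,\Vert f^{(2)}\Vert_J+\frac{C}{J^2}\,\max_{s\neq s'}\big|J^2P^J_{s,s'}-\alpha_sQ_{s,s'}\big|\,\Vert f\Vert_J+\frac{K}{J^4}.$$

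For the regularity of $T_t$, I would differentiate the backward equation $\partial_t\phi=L_{x,s}\phi$ in the $x$ variable only: the jump part of $L_{x,s}$ does not involve $x$, so it simply reproduces itself on $\phi^{(j)}(\cdot,s)$, and Lemma \ref{lemme2} holds verbatim with the same $L_j,\nu_j,\psi_j$ (now with $s$ the current value of the second coordinate) plus this jump operator. Applying Feynman–Kac as in \eqref{feykack}, but to the joint Markov process $(Y^j_t,S_t)$ — with $Y^j$ driven by generator $L_j$ evaluated at the current selection value and $S_t$ the rate–$\alpha/J^2$ jump process of generator $Q$ — the jump term is absorbed into the generator and produces no source term. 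Because $E$ is finite, $c_j:=\sup_{x,s}|J^2\nu_j(x,s)|$ is finite and $J$–independent, and $\psi_j/J=O(J^{-2})$ exactly as before, so the recursion of Proposition \ref{prop1} closes and yields \eqref{crucest}, i.e. $\Vert(T_tf)^{(j)}\Vert_J\le e^{c_jt/J^2}\sum_{i\le j}a_{i,j}\Vert f^{(j)}\Vert_J$ for $j=1,2$ (using $f\in\mathscr{C}^4(\mathscr{I})$ and that $T_t$ preserves $\mathscr{C}^4$ in $x$ fiberwise, which we take as part of the hypotheses, invoking \cite{ethier76} for each fixed $s$). Inserting the one–step estimate and \eqref{crucest} into the telescopic bound and summing over $0\le k\le n-1$ with $n\le J^2T$: the $\Vert\gamma_j^J\Vert\le c/J^3$ terms times the $e^{c_jk/J^2}$ factors sum to $O(J^{-1})$ with an at–most–exponential–in–time prefactor $\tilde\Gamma(T)$ exactly as in Section 3; the $J^{-2}\max|J^2P-\alpha Q|\,\Vert f\Vert_J$ term sums to $\le T\max|J^2P-\alpha Q|\,\Vert f\Vert_J$, giving $k_0(T)$ linear in time; and the $O(J^{-4})$ terms sum to $O(J^{-2})$. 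This is precisely the announced bound.

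The main obstacle I anticipate is the bookkeeping of orders in $1/J$ in the one–step estimate — in particular verifying that every mixed ``$x$–move $\times$ $s$–jump'' contribution, together with the new jump terms appearing in $L_{x,s}^2$ and in the equation for $\phi^{(j)}$, is genuinely of higher order — and checking that the jump operator in the $\phi^{(j)}$ equation is really (part of) a bona fide Markov generator, so that the contraction/Gronwall steps of Proposition \ref{prop1} carry over unchanged.
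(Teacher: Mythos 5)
Your proposal is correct and follows essentially the same route as the paper: the same telescopic decomposition, a one-step lemma obtained by splitting off the selection jump (your conditioning $\sum_{s'}P^J_{s,s'}\mathbb{E}_x[f(X_1,s')-f(X_1,s)]$ is exactly the paper's decomposition of $\mathbb{E}_{x,s}[f(X_1,s_1)-f(X_1,s)]$), and the observation that Lemma \ref{lemme2} and the Feynman--Kac bound survive because the jump part of $L_{x,s}$ commutes with $\partial_x$. The only blemish is the bookkeeping of the cross term: the correct reason it is $O(J^{-4})$ is that $\mathbb{E}_x[f(X_1,s')-f(x,s')]=O(J^{-2})$ under weak selection (as in \eqref{esperance1}), not that an $O(J^{-1})$ remainder times $P^J_{s,s'}=O(J^{-2})$ gives $O(J^{-4})$ --- though even an $O(J^{-3})$ term there would still be absorbed into $\tilde\Gamma/J$ after summation.
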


\begin{proof}
The sheme of proof will be the same than for constant selection. Let us focus on the first lemma, where some changes have to be highlighted.

\begin{lemma}
There exist bounded functions of $(x,s)$,  $\Gamma_{j}^{J}$ ($j=1,2$), and a constant $K$ such that :
\begin{align*}
\Vert(S_1-T_1)f\Vert_{J}&\leq\Vert\Gamma_1^{J}\Vert\, \Vert f^{(1)} \Vert_{J} +\Vert\Gamma_2^{J}\Vert\, \Vert f^{(2)} \Vert_{J}\\&+\sum\limits_{s'\in E} 
\left|P^{J}_{s,s'}-\frac{\alpha_{s}}{J^2}Q_{s,s'}\right|\Vert f(x,s')-f(x,s) \Vert_{J}+\frac{K}{J^3}
\end{align*}
\end{lemma}

\begin{proof}
 We provide first the quivalent of \eqref{equation11} in our context, i.e. there exists $|w'_2|<1$ such that
\begin{align*}
(S_1-T_1)f(x,s)= &\mathbb{E}_{x,s} \Big[f(X_1,s_1)\Big]-f(x,s)- L_{x,s}f(x,s)+w'_{2}L_{x,s}²f(x,s)\\
=&\mathbb{E}_{x,s} \Big[f(X_1,s_1)-f(X_{1},s)\Big]+\mathbb{E}_{x} \Big[f(X_1,s)-f(x,s)\Big]\\
&-L_{x}f(x,s)-\sum\limits_{s'\in E} \frac{\alpha_{s}}{J^2}Q_{s,s'}\left( f(x,s')-f(x,s) \right) +w'_{2}L_{x,s}^2f(x,s)
\end{align*}

In fact as before, $L_{x,s}^2f$ is still of the order of $\frac{1}{J^4}$, then

\begin{align}\nonumber
&\Big|(S_1-T_1)f(x,s)\Big|\\
&\leqslant\Big|\mathbb{E}_{x} \Big[\mathbb{E}_{s}\big[f(x_1,s_1)-f(x_{1},s)|X_{1}=x_1\big]\Big]-L_{x}f(x,s)\\ \nonumber
&\qquad+\mathbb{E}_{x} \Big[f(X_1,s)-f(x,s)\Big]
-\sum\limits_{s'\in E} \frac{\alpha_{s}}{J^2}Q_{s,s'}\left( f(x,s')-f(x,s) \right) \Big|+\frac{K}{J^4}\\ \nonumber
&\leqslant \Big|\mathbb{E}_{x} \left[ \sum\limits_{s'\in E} P_{s,s'}\big(f(X_{1},s')-f(X_{1},s)\big)-\frac{\alpha_{s}}{J^2}Q_{s,s'} \big(f(x,s')-f(x,s)\big)  \right]\\ \nonumber
&\qquad+\mathbb{E}_{x} \Big[f(X_1,s)-f(x,s)\Big]
-L_{x}f(x,s)\Big|+\frac{K}{J^4}\\ \nonumber
&\leqslant \Big|\mathbb{E}_{x} \left[ \sum\limits_{s'\in E} P_{s,s'}\big(f(X_{1},s')-f(x,s')\big)+f(x,s')\Big(P_{s,s'}-\frac{\alpha_{s}}{J^2}Q_{s,s'} \Big)  \right]\Big|\\ \nonumber
&\qquad+\Big|-P_{s,s'}f(X_{1},s)+\frac{\alpha_{s}}{J^2}Q_{s,s'}f(x,s)\Big|\\\nonumber
&\qquad+\Big|\mathbb{E}_{x} \Big[f(X_1,s)-f(x,s)\Big] 
-L_{x}f(x,s)\Big|+\frac{K}{J^4}\\ 
&\leqslant\Big|\mathbb{E}_{x} \Big[f(X_1,s)-f(x,s)\Big]
-L_{x}f(x,s)\label{s1t11}\Big| \\ 
&\qquad+ \Big|\sum\limits_{s'\in E} P_{s,s'}\left(\mathbb{E}_{x}\big[f(X_{1},s')-f(x,s')\big]+\mathbb{E}_{x}\big[f(x,s)-f(X_{1},s)\big]\right) \label{s1t12}\Big|\\
&\qquad+\sum\limits_{s'\in E} \Big|P_{s,s'}-\frac{\alpha_{s}}{J^2}Q_{s,s'}\Big|\Vert f(x,s')-f(x,s)\Vert_{J})+\frac{K}{J^4} .\label{s1t13}
\end{align}

Let now look at the order in $J$ of each term of the previous inequality. First  with the arguments used in \eqref{gamma}, there exist $K_4$ constant ,  $\Gamma^J_1$ and $\Gamma^J_2$ of the order of $\frac{1}{J^3}$ such as :
$$\mathbb{E}_{x} \Big[f(X_1,s)-f(x,s)\Big]-L_{x}f(x,s)\leqslant \Gamma_1^{J}\partial_xf (x,s) +\Gamma_2^{J} \partial_{xx} f(x,s)+\frac{K_{4}}{J^4}. 
$$
Then recall that $P_{s,s'}$ is of the order of $\frac{1}{J^2}$ and by the same calculations than in \eqref{esperance1} $\mathbb{E}_{x}\big[f(X_{1},s')-f(x,s')\big]$ is also of the order of $\frac{1}{J^2}$  so \eqref{s1t12} is at most of the order of $\frac{1}{J^4}$. \\
Finally \eqref{s1t13} can be written
$$\sum\limits_{s'\in E} \frac{1}{J^2}\Big(J^2P_{s,s'}-\alpha_{s}Q_{s,s'}\Big)\Big(f(x,s')-f(x,s)\Big)$$ and by \eqref{assumption1} is at least $o(\frac{1}{J^2})$. \\

Note that in the case where $f$ is Lipschitz in the second variable, as $s$ is of the order of $\frac{1}{J}$,  it's possible to obtain a better order $o(\frac{1}{J^3})$.

Anyway,
\begin{align*}
(S_1-T_1)f(x,s)\leqslant &\Vert\Gamma_1^{J}\Vert \Vert\partial_xf \Vert +\Vert\Gamma_2^{J} \Vert \Vert \partial_{xx} f\Vert\\
&+\sum\limits_{s'\in E} \frac{1}{J^2}\Big|J^2P_{s,s'}-\alpha_{s}Q_{s,s'}\Big|\Vert f(x,s')-f(x,s)\Vert_{J}\\
&+\frac{K'}{J^4}.
\end{align*}
\end{proof}

Assume now that  $ \forall s, f(.,s)  \in \mathscr{C}^2(I) $, and note $f^{(j)}$ the the jth derivative in $x$ of $f$.
 Note that the lemma \ref{lemme2} holds even if $s$ is no longer constant. Indeed $L_s$ is not affected by the derivative in $x$. So we get $ \forall j\in\{1,2\}$and $\forall k\leqslant j$, that there exist $ c'_j$and $a'_{k,j}\in\mathbb{R}^{+}$ independent of $J$ such that :
 $$ \Vert (T_tf)^{(j)} \Vert \leq  \exp\left(c'_j\frac{t}{J^2} \right)\sum\limits_{k=1}^{j} a'_{k,j}\Vert f^{(j)} \Vert $$
 with $ c'_j=\sup\limits_{x\in[0,1]}|j(j-1)-Js(1-2x)-Jm| $.\\
We still have that $T_t(.,s):\mathscr{C}^{2}(I)\rightarrow \mathscr{C}^2(I)$,$ \forall s$. And there exists a continuous function $R_{j}$ at most exponential in time and a linear function of time $k_0$ independent of $J$ verifying:
\begin{align*}
\Vert S_nf-T_nf \Vert_{J} &\leq \sum_{k=0}^{n-1}(\Vert (S_1-T_1)T_kf \Vert_{J} \\
&\leq \sum_{k=0}^{n-1}\Big(\Vert\Gamma_1^{J}\Vert \Vert (T_kf)^{(1)} \Vert_{J} +\Vert\Gamma_2^{J} \Vert\Vert (T_kf)^{(2)} \Vert_{J}\\
&\qquad\qquad+\sum\limits_{s'\in E} \frac{1}{J^2}\Big|J^2P_{s,s'}-\alpha_{s}Q_{s,s'}\Big|\Vert T_kf(x,s')-T_kf(x,s)\Vert_{J}\\&\qquad\qquad+O\left(\frac{1}{J^4}\right) \Big) \\
& \leq \sum_{j=1}^{2}\Vert\Gamma^{J}_j \Vert\sum\limits_{k=1}^{j} a'_{k,j}\Vert f^{(j)} \Vert_{J}\sum_{k=0}^{n-1}\exp\left(c'_j\frac{k}{J^2}\right)+O\left(\frac{1}{J^2}\right) \\
&\quad+k_0\max\limits_{s,s' \in E}\Big|J^2P_{s,s'}-\alpha_{s}Q_{s,s'}\Big|\Vert f\Vert\\
&\leq  \sum_{j=1}^{2}\Vert\Gamma^{J}_j\Vert R_{j}J^2 \Vert f^{(j)} \Vert_{J}+k_0\max\limits_{s,s' \in E}\Big|J^2P_{s,s'}-\alpha_{s}Q_{s,s'}\Big|\Vert f\Vert\\
&\quad+O\left(\frac{1}{J^2}\right )  \underset{J\to+\infty}{\longrightarrow}0
\end{align*}because if $J$ is big enough,
\begin{align*}
\sum\limits_{k=1}^{j} a'_{k,j}\sum_{k=0}^{n-1}\exp\left(c'_j\frac{k}{J^2}\right) =\sum\limits_{k=1}^{j} a'_{k,j}\frac{1-\exp(c'_{j}\frac{n}{J^2})}{1-\exp(\frac{c'_j}{J^2})}\leq J^2R_{j}(t)
\end{align*}
and $R_{j}$ is independent of $J$ because $n$ is  of the order of $J^2$. Finally, let $\Tilde{\Gamma}= \sup\limits_{J\in \mathbb{N}}\max\limits_{j\in{1,2}}\Vert\Gamma^{J}_j\Vert R_{j}J^3 $,  so that $\Tilde{\Gamma}$ does not depend on $J$ and is at most exponential in time. Then 

\begin{align*}
\Vert S_nf-T_nf \Vert_{J} &\leq \frac{\Tilde{\Gamma}}{J}\Big(\Vert f^{(1)} \Vert_{J}+\Vert f^{(2)} \Vert_{J}\Big)+k_0\max\limits_{s,s' \in E}\Big|J^2P_{s,s'}-\alpha_{s}Q_{s,s'}\Big|\Vert f\Vert_{J}\\
&\qquad+o(\frac{1}{J^2}).
\end{align*}
And this concludes the proof.
\end{proof}

\section{Random limiting selection as a diffusion process}

In this section, we assume that the limiting selection is an homogeneous diffusion process. Once again for simplicity we will  suppose that the immigration coefficient is constant. First consider the following the stochastic differential equation:

\begin{align*}
dS_t&=\frac{1}{J^2}b(S_t)dt+\frac{\sqrt{2}}{J}\sigma(S_t)dB_t\\
S_0&=s
\end{align*}
with $b$ and $  \sigma $ are both bounded and lipschitzian functions, i.e.: $\forall t\geq 0 $,$ s,s' \in \mathbb{R} $,  it exists $ k \geq 0 $ such that :
$$\vert b(t,s)-b(t,s') \vert + \vert \sigma(t,s)-\sigma(t,s') \vert \leq  L \vert s-s' \vert$$
for some constant $L$. These assumptions guarantee the existence of strong solutions of $(S_t)_{t\geq 0}$ and $(S_t)_{t\geq 0} $ has for generator
$$L_s=\frac{\sigma^2(s)}{J^2}\frac{\partial ^2}{\partial s^2}+ \frac{b(s)}{J^2} \frac{\partial}{\partial s}.$$

Let $Z_t=S_{tJ^2}$,  then the process $(S_{tJ^2})_{t \geq 0 }) $ is independent of $J$.
$$dZ_t=\sqrt{2}\sigma(Z_t)dB_t+b(Z_t)dt,\qquad Z_0=s.$$

For $T\in \mathbb{N}$,  let divide the interval $[0,T]$ in $TJ^2$ regular intervals and let introduce $\mathscr{T}_J=\{0,\frac{1}{J^2},\cdots, T \}$.
Use now the standard Euler discretization and consider
$Z_t$ defined by the relation:
$$Z_{k+1}=Z_{k}+\frac{1}{J^2}b(Z_{k})+\sqrt{2}\sigma(Z_{k})(B_{k+1}-B_k)$$
where the quantity $(B_{k+1}-B_k)_{k\leq J^2T}$ are i.i.d and follow a $\mathscr{N} (0,\frac{1}{J^2})$. It is well known that $$\sup\limits_{t\in \mathscr{T}_J}\vert \Tilde{Z}_t-Z_{tJ^2}\vert\xrightarrow{J\rightarrow\infty}0.$$
So it follows $$\sup\limits_{t\in \{0,\cdots,J^2T\}}\vert S_t-Z_{t}\vert\xrightarrow{J\rightarrow\infty}0$$

It is of course possible to use another discretization to approach $S_t$ and the following method  will still hold. There is however a small issue: in the model described in first part, for rescaling argument, the selection parameters must be in $]-1,\infty[$.  Our Markov process  $(S_t)_{t\geq 0}$ is in $\mathbb{R}$. \\
It is thus necessary to introduce the function  $h: \mathbb{R} \longrightarrow E_s$ where $E_s $ is a close bounded interval included in $]-1+\varepsilon, \infty [ $ for some $\varepsilon>0$. \\
We assume $h$ is  in $\mathscr{C}^2$ and we consider now $h(  (S_t))_{t\geq 0}$ for the selection parameter.\\
Note that to have a non trivial stochastic part in our final equation, we need as in the first section that  $h$ is of the order of $\frac{1}{J}$. Many choices are possible for $h$ and will depend on modelisation issue.

Let us give back the definition of our Moran process in this context. 
\begin{align*}
&P_+=(1-x)\left( mp+(1-m)\frac{x(1+h(s))}{1+h(s)x} \right)\\
&P_-=x\left( m(1-p)+(1-m) \left(1-\frac{x(1+h(s))}{1+h(s)x}\right) \right)\\
\end{align*}
Its first moments are given by, still denoting $\Delta=J^{-1}$,
\begin{align*}
\mathbb{E}_{x} \left[X_{n+1}-x |U_n\right]&=\Delta\Big[m(p-x)+\frac{(1-m)h(s)x(1-x)}{1+h(s)x}\Big]\\
Var\left( X_{n+1}-x|U_n\right)&=\Delta^2\Big[ \mathbb{E}_{x} \left( X_{n+1}-x \right)^2-\mathbb{E}_{x}^2  \left(X_{n+1}-x \right)\Big]\\
&=\Delta^2\Big[mp(1-2x)+x +\frac{(1-m)h(s)x(1-x)(1-2x)}{1+h(s)x}\\
&\qquad\qquad-\left(m(p-x)+\frac{(1-m)h(s)x(1-x)}{1+h(s)x}\right)^2 \Big].
\end{align*}

As in the previous case we use the process $ \left( Y_{t}\right)_{t \geq 0 }$ having the following generator to approach the Moran process when $J$ tends to infinity:

\begin{align*}
L_x=\frac{x(1-x)}{J^2}\frac{\partial^2}{\partial x^2}+\frac{1}{J}[m(p-x)+h(s)x(1-x)]\frac{\partial }{\partial x}
\end{align*}
So our aim is to give an upper bound for the error committed when $$ \left( X_n,Z_n\right)  \underset{J \to  \infty } \longrightarrow \left( Y_t,S_t \right).$$\\
Let denote by  $H$ the generator of the two dimensional  process 
$ \left( Y_t,S_t \right) $.\\

$$H=L_x+L_s=\frac{x(1-x)}{J^2}\frac{\partial^2}{\partial x^2}+\frac{1}{J^2}\Big[h(s)x(1-x)+m(p-x)\Big]\frac{\partial}{\partial x}+\frac{\sigma^2(s)}{J^2}\frac{\partial ^2}{\partial s^2}+\frac{b(s)}{J^2}\frac{\partial }{\partial s}$$

Let now state the main result of this section:
\begin{theorem}
Let $f$ be in $\mathscr{C}^4$ then there exists a function at most exponential in time $q'$ such that 
$$\sup\limits_{x\in[0,1]}\vert \mathbb{E}_{x,s}\left (f( X_n,Z_n) \right) - \mathbb{E}_{x,s}\left (f( Y_t,S_t) \right) \vert  \le\frac{q'(n)}{J}(\|\nabla f\|+\|{\rm Hess} f\|) +O(\frac{1}{J^2}).
$$
\end{theorem}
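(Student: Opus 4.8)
The plan is to run the same three-step scheme as in Section 3 (and its jump variant in Section 4), now on the product state space $\mathscr{I}=[0,1]\times\R$: a telescopic decomposition of $S_n-T_n$, a one-step estimate of $S_1-T_1$, and regularity estimates on the limiting semigroup $T_t$ attached to the generator $H=L_x+L_s$. Writing $(S_nf)(x,s)=\mathbb{E}_{x,s}[f(X_n,Z_n)]$ and $(T_tf)(x,s)=\mathbb{E}_{x,s}[f(Y_t,S_t)]$, both are contraction semigroups on the bounded functions on $\mathscr{I}$, so exactly as for \eqref{equation11} one has $\|S_nf-T_nf\|_J\le\sum_{k=0}^{n-1}\|(S_1-T_1)T_kf\|_J$, and it remains to bound the one-step error of a smooth function and the regularity of $T_kf$.

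For the one-step estimate I would Taylor expand $f(X_1,Z_1)$ around $(x,s)$ to third order and take expectations. The inputs are: the first three centred moments of the Moran increment $X_1-x$, which are those already computed with $s$ replaced by $h(s)$ (first of order $1/J$, second of order $1/J^2$, third of order $1/J^4$); the moments of the Euler increment $Z_1-s$, namely $\mathbb{E}[Z_1-s]=b(s)/J^2$, $\mathbb{E}[(Z_1-s)^2]=2\sigma^2(s)/J^2+O(1/J^4)$ and $\mathbb{E}[(Z_1-s)^3]=O(1/J^4)$ (odd Gaussian moments vanish); and the conditional independence of $X_1$ and $Z_1$ given $(x,s)$, so that mixed moments factorise, in particular $\mathbb{E}[(X_1-x)(Z_1-s)]=O(1/J^3)$. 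Comparing, pointwise in $(x,s)$, with $Hf=L_xf+L_sf$ and using that $\|H^2f\|_J=O(1/J^4)$ by weakness of selection and immigration, one finds that the $s$-marginal of the discretisation matches $L_s$ up to $O(1/J^4)$, that the $x$-marginal produces the very same coefficients $\Gamma^J_1,\Gamma^J_2$ of order $1/J^3$ as in \eqref{gamma}, and that the mixed moment produces one additional coefficient $\Gamma^J_3$ of order $1/J^3$ in front of $\partial_{xs}f$. Hence $\|(S_1-T_1)g\|_J\le\frac{C}{J^3}\bigl(\|\nabla g\|+\|{\rm Hess}\, g\|\bigr)+O(1/J^4)$ for $g\in\mathscr{C}^4(\mathscr{I})$.

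The core of the proof is the regularity estimate, i.e.\ the analogue of Proposition \ref{prop1}: bounds $\|(T_tf)^{(\beta)}\|\le e^{c_\beta t/J^2}\sum a_\beta\|f^{(\beta)}\|$ with $c_\beta$ independent of $J$, for the derivatives $\beta\in\{\partial_x,\partial_x^2,\partial_s,\partial_{xs}\}$ occurring in the one-step estimate. Putting $\phi=T_tf$ and $\partial_t\phi=H\phi$, the structural facts to exploit are: $L_s$ has $x$-independent coefficients, so $\partial_x$ commutes with $L_s$ and the $x$-derivatives of $\phi$ satisfy exactly the equations of Lemma \ref{lemme2} with $s$ replaced by $h(s)$, whence $\|\phi^{(1)}\|\le e^{c_1t/J^2}\|f^{(1)}\|$ and $\|\phi^{(2)}\|\le e^{c_2t/J^2}(a_{1,2}\|f^{(1)}\|+a_{2,2}\|f^{(2)}\|)$ by the same Feynman-Kac argument; differentiating $L_x\phi$ in $s$ produces only the source term $\frac{h'(s)x(1-x)}{J}\phi^{(1)}$, whose coefficient is of order $1/J^2$ since $h'=O(1/J)$ by weak selection; and differentiating $L_s\phi$ in $s$ produces a $\partial_s^2\phi$ term which is absorbed, together with the $\partial_s^2\phi$ coming from $L_s$ itself, into a modified diffusion generator $\widetilde L_s=\frac{\sigma^2(s)}{J^2}\partial_s^2+\frac{b(s)+(\sigma^2)'(s)}{J^2}\partial_s$ plus a bounded potential $\frac{b'(s)}{J^2}$. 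A Feynman-Kac representation for $L_x+\widetilde L_s$ with this potential and source then controls $\partial_s\phi$; one further differentiation in $x$ gives an equation $\partial_t\partial_{xs}\phi=\widetilde L\,\partial_{xs}\phi-\widetilde\nu\,\partial_{xs}\phi+(\text{sources in }\phi^{(1)},\phi^{(2)},\partial_s\phi)$ with source coefficients of order at most $1/J$, so a last Feynman-Kac yields $\|\partial_{xs}\phi\|\le e^{ct/J^2}(a\|\nabla f\|+a'\|{\rm Hess}\, f\|)$. All constants $c_\beta$ are finite and $J$-independent because weak selection and immigration keep the rescaled coefficients bounded; and the underlying processes do not explode on $[0,T]$ --- the $x$-component stays in $[0,1]$ by the Feller boundary argument of Section 3, and the $s$-component is well defined since $b$, $\sigma$ and $(\sigma^2)'$ are bounded and Lipschitz.

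Finally, plugging these bounds into the telescoped sum and using that $n$ is of order $J^2t$, each geometric sum $\sum_{k=0}^{n-1}e^{c_\beta k/J^2}\le\frac{J^2}{c_\beta}(e^{c_\beta n/J^2}-1)=:J^2\widetilde R_\beta(t)$ with $\widetilde R_\beta$ at most exponential in $t$ and independent of $J$; since $\|\Gamma^J_i\|=O(1/J^3)$, each contribution becomes $\widetilde R_\beta(t)\,O(1/J)(\|\nabla f\|+\|{\rm Hess}\, f\|)$, while the $O(1/J^4)$ remainders summed over $n$ terms give $O(1/J^2)$. Taking $q'(n)$ to be the maximum of the resulting constants times the $\widetilde R_\beta$ gives the claimed bound, exponential in time because the regularity constants $c_\beta$ need not vanish (under an analogue of the condition $m'>|s'|$ one would expect a linear-in-time $q'$, as in Section 3, but we do not pursue this). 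I expect the main obstacle to be precisely the regularity estimate for the mixed derivative $\partial_{xs}(T_tf)$: one must track the extra source terms created by differentiating $H$ in both variables, recognise the $\partial_s^2$-absorption into $\widetilde L_s$, and check that every source coefficient carries at least one extra power of $1/J$ --- which is exactly where the weak-selection scaling $h,h'=O(1/J)$ enters --- so that the Feynman-Kac integrals over a time horizon of order $J^2$ stay bounded; a secondary but necessary point is to confirm that the $s$-direction of the one-step error is genuinely $O(1/J^4)$, so that only the $x$- and mixed directions contribute at the critical order $1/J^3$.
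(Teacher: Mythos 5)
Your proposal follows the paper's scheme exactly in outline (telescoping, one-step Taylor comparison against $H=L_x+L_s$, Feynman--Kac regularity for $T_t$), and it is correct, but it diverges at one point and this changes how much work the third step requires. In the one-step estimate the paper uses the factorisation $\mathbb{E}[(X_1-x)(Z_1-s)]=\mathbb{E}[X_1-x]\,\mathbb{E}[Z_1-s]$ together with the weak-selection/immigration scaling, which gives $\mathbb{E}[X_1-x]=O(1/J^2)$ and $\mathbb{E}[Z_1-s]=b(s)/J^2$, hence a mixed moment of order $1/J^4$ rather than your $1/J^3$; consequently no $\partial_{xs}f$ coefficient survives at the critical order $1/J^3$, only the two $x$-derivative coefficients $\gamma^J_1,\gamma^J_2$ do, and the only regularity needed is that of $\partial_x T_tf$ and $\partial_x^2 T_tf$ --- which is Proposition \ref{prop1} verbatim, since $L_s$ has $x$-independent coefficients and commutes with $\partial_x$. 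Your route, carrying a $\Gamma^J_3\,\partial_{xs}f$ term and proving regularity for $\partial_s(T_tf)$ and $\partial_{xs}(T_tf)$ via the absorption into $\widetilde L_s$ and the source term $\tfrac{h'(s)x(1-x)}{J}\partial_x\phi$, is not wrong and still lands inside the stated bound (the Hessian norm covers the mixed derivative), but it is avoidable work; on the other hand it is genuinely more careful than the paper on a point the paper leaves implicit, namely that the $O(1/J^4)$ remainders applied to $T_kf$ involve $s$-derivatives of $T_kf$ that must themselves be controlled uniformly over $k\le n\sim J^2$, and that this requires $h'=O(1/J)$ (true for the natural choice $h=h_0/J$ but not a formal consequence of $h=O(1/J)$). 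So: same architecture, one superfluous branch caused by an over-pessimistic mixed-moment bound, and one welcome tightening of a step the paper dispatches with ``the proof of Proposition \ref{prop1} is exactly the same.''
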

\begin{proof}
Let $ P_n $ be the operator defined on the space of bounded functions on $E$ by:

$$\left(P_nf \right)(x,s)=\mathbb{E}_{x,s}\Big[f\left(X_n,Z_n \right)\Big]$$ 
It is of course a semigroup so that $P_{n+m}=P_nP_m  ,\forall m,n \in  \mathbb{N}$. In parallel, let $ (T_t)_{t \geq 0 } $ be defined on the space of bounded continuous functions by :

$$\left( T_tf \right)(x,s)=\mathbb{E}_{x,s} \Big[ f \left( Y_t,S_t \right)  \Big] $$ 
also verifying,$T_{t+s}=T_sT_t, \forall s\geq 0,t \geq 0.$ The starting point is as in the first part of \eqref{equation11},
$$
\vert P_nf(x,s)-T_nf(x,s) \vert \leq \sum_{k=0}^{n-1} \Vert \left( P_1-T_1 \right) T_kf \Vert.
$$

We now focus on the quantity $\Vert \left( P_1-T_1 \right) T_kf \Vert$, the following lemma gives a upper bound of the quantity $\Vert \left( P_1-T_1 \right) f \Vert$ for $f$ in $\mathscr{C}⁴$.

\begin{lemma}
Let $f$ be in  $\mathscr{C}⁴$ it exists
  $ \gamma^J_{1} $ ,and $\gamma_{2}^J$ such as : 
\begin{align*}
\Vert (P_1f)(x,s)-(T_1f)(x,s) \Vert =& \gamma^J_{1}  \Vert \frac{\partial}{\partial x}  f(x,s) \Vert + \gamma^J_{2}  \Vert \frac{\partial^2}{\partial x^2} f(x,s) \Vert +O(\frac{1}{J^4})
\end{align*} 
where $\gamma^J_1$ and $\gamma^J_2$ are of order $\frac1{J^3}$.
\end{lemma}

\begin{proof}
We will use the same methodology. First the Taylor expansion (in space) of $P_1$ gives:

\begin{align*}
(P_1f)(x,s)=&f(x,s)+\mathbb{E}_{x,s}\Big[X_1-x \Big]\frac{\partial}{\partial x}f(x,s)+\mathbb{E}_{x,s}\Big[Z_1-s \Big]\frac{\partial}{\partial s}f(x,s)\\
&+\frac{1}{2}\mathbb{E}_{x,s}\Big[(X_1-x )^2	\Big]\frac{\partial ^2}{\partial x^2}f(x,s)+\frac{1}{2}\mathbb{E}_{x,s}\Big[(Z_1-s)^2 \Big]\frac{\partial^2}{\partial s^2}f(x,s)\\
&+2\mathbb{E}_{x,s}\Big[(X_1-x)(Z_1-s) \Big]\frac{\partial^2}{\partial x \partial s}f(x,s)+O(\frac{1}{J^4}).
\end{align*}
Indeed we have the quantities:
\begin{align*}
&\mathbb{E}_{x,s}\Big[X_1-x \Big]=\frac{1}{J}\Big[m(p-x)+\frac{(1-m)h(s)x(1-x)}{1+h(s)x}\Big]\\
&\mathbb{E}_{x,s}\Big[(X_1-x)^2 \Big]=\frac{1}{J^2}\Big[mp(1-2x)+x+\frac{(1-m)x(1+h(s))(1-2x)}{1+h(s)x}\Big]\\
&\mathbb{E}_{x,s}\Big[Z_1-s \Big]=\frac{b(s)}{J^2}\\
&\mathbb{E}_{x,s}\Big[(Z_1-s)^2 \Big]=\frac{b^2(s)}{J^4}+\frac{2\sigma^2(s)}{J^2}=\frac{2\sigma^2(s)}{J^2}+O(\frac{1}{J^4})\\
&\mathbb{E}_{x,s}\Big[(X_1-x)(Z_1-s) \Big]=\mathbb{E}_{x,s}\Big[X_1-x \Big]\mathbb{E}_{x,s}\Big[Z_1-s \Big]\\
&\qquad \qquad \qquad \qquad \qquad = \frac{b(s)}{J^3}\Big[m(p-x)+\frac{(1-m)h(s)x(1-x)}{1+h(s)x}\Big]=O(\frac{1}{J^4})\\
&\mathbb{E}_{x,s}\Big[(X_1-x)^3 \Big]=O(\frac{1}{J^4}),\qquad\qquad\mathbb{E}_{x,s}\Big[(Z_1-s)^3 \Big]=O(\frac{1}{J^4}).
\end{align*}
And the Taylor expansion of $T_1$ in times gives:
$$\left(T_1f \right)(x,s)=f(x,s)+ L_xf(x,s)+L_sf(x,s)+O(\frac{1}{J^4}).$$
Indeed it is easy to see that $H²$ is $O(\frac{1}{J^4})$. Do now the difference
\begin{align*}
(P_1-T_1)f(x,s)=&\mathbb{E}_{x,s}\Big[X_1-x \Big]\frac{\partial}{\partial x}f(x,s)+\mathbb{E}_{x,s}\Big[Z_1-s \Big]\frac{\partial}{\partial s}f(x,s)\\
&+\frac{1}{2}\mathbb{E}_{x,s}\Big[(X_1-x )^2	\Big]\frac{\partial ^2}{\partial x^2}f(x,s)\\
&+\frac{1}{2}\mathbb{E}_{x,s}\Big[(Z_1-s)^2 \Big]\frac{\partial^2}{\partial s^2}f(x,s)\\
&- L_xf(x,s)-L_sf(x,s) +O(\frac{1}{J^4}).
\end{align*}
Finally, \begin{align*}
&(P_1-T_1)f(x,s)\\
&\qquad=-\frac{1}{J}\Big[\frac{h(s)x(1-x)}{1+h(s)x}(m+h(s)x)\Big]\frac{\partial }{\partial x}f(x,s)\\
&\qquad\quad+\frac{h(s)x²+x(1-2x)(h(s)-m-mh(s))-2x(1-x)h(s)}{1+h(s)x}\frac{\partial ^2}{\partial x^2}f(x,s)\\
 &\qquad\quad+\frac{b^2(s)}{J^4}\frac{\partial^2}{\partial s^2}f(x,s)+O(\frac{1}{J^4}).
\end{align*}
Let us conclude by taking the norm to get
\begin{align*}
&\gamma^J_{1}=\sup\limits_{(x,s)\in E_s\times[0,1]}\frac{1}{J}\Big|\frac{h(s)x(1-x)}{1+h(s)x}(m+h(s)x)\Big|\\
&\gamma^J_{2}=\sup\limits_{(x,s)\in E_s\times[0,1]}\frac{1}{2J^2}\Big|mp(1-2x)+ \frac{(1-m)(1+h(s))x(1-2x)}{1+h(s)x}-x(1-2x) \Big|
\end{align*}so that we obtain the result.
\end{proof}

Then \eqref{assumption1} still holds for this case as the proof of \ref{prop1} is exactly the same, so the end follows as in the first part.
\end{proof}

\section{Appendices : Wright-Fisher discrete model and its approximation diffusion}

Let's consider the Wright-Fisher discrete model with selection and immigration. The population  still consists of two species, immigration and selection are still the same. But the Markovian process $X^n_J$ evolves according to the following probability:   

\begin{equation*}
  \pp\left(X^J_{n+1}=\frac{k}{J}|X^J_{n}=x\right)  = \left(\begin{array}{c} J\\k\end{array}\right)P_x^k(1-P_x)^{J-k}
\end{equation*}
\vspace{2mm}
with $P_x=mp+(1-m)\frac{(1+s)x}{1+sx}$.

At each step, all the population is renewed, so this process goes $J$ times faster than the Moran process. And we usually, in the case of weak selection and immigration, use the diffusion $\{Y_t\}_{t>0}$ defined by the following generator to approach this discrete model, when the population goes to infinity.

$$ L=\frac{1}{2J}x(1-x)\frac{{\partial}^2}{\partial{x}^2}+ \left( sx(1-x)+m(p-x)\right)\frac{\partial}{\partial{x}}$$

\begin{theorem}
\quad\\
Let $f$ be in $C^5(I)$ then there is a function $q(t)$ growing at most exponentially in time, depending on $m'$ and $s'$ which satisfies when $J$ goes to infinity:
$$
\sup\limits_{x\in I_J}\vert \mathbb{E}_x \Big[f(X_{n}^{J} )\Big]-\mathbb{E}_x \Big[f(Y_{n}^{J} ) \Big] \vert \le \left(\Vert f^{(1)} \Vert_{J} +\Vert f^{(2)} \Vert_{J}+\Vert f^{(3)} \Vert_{J}\right)\frac{q(n)}{J}+o\left(\frac{1}{J}\right).
$$
\end{theorem}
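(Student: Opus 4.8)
\emph{Plan.} The proof follows the same three ingredients as in Section~3 --- a telescopic decomposition, a one-step comparison of $S_1$ and $T_1$, and regularity estimates on the Wright--Fisher diffusion semigroup --- the only genuinely new feature being that the Wright--Fisher one-step law is the binomial sampling $JX_1^J\sim\mathrm{Bin}(J,P_x)$ with $P_x=mp+(1-m)\frac{(1+s)x}{1+sx}$, so that the increment $X_1^J-x$ has size $J^{-1/2}$ (not $J^{-1}$ as for Moran) and the Taylor expansions must be pushed one order further. Writing $S_nf(x)=\mathbb{E}_x[f(X_n^J)]$ and $T_tf(x)=\mathbb{E}_x[f(Y_t^J)]$, both are contraction semigroups (for $\Vert\cdot\Vert_J$ and the sup norm respectively), so the telescoping identity, exactly as in \eqref{equation11}, gives
$$\Vert S_nf-T_nf\Vert_J\le\sum_{k=0}^{n-1}\Vert(S_1-T_1)T_kf\Vert_J ,$$
and it remains to estimate $(S_1-T_1)$ on a regular function and to control the first three $x$-derivatives of $T_kf$.

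\emph{One-step estimate (the main new point).} Since $m=m'/J$ and $s=s'/J$, one has $P_x-x=\tfrac1J(m'(p-x)+s'x(1-x))+O(J^{-2})$ and $\mathrm{Var}_x(X_1^J)=P_x(1-P_x)/J=\tfrac{x(1-x)}J+O(J^{-2})$, and the centred binomial moments $\mathbb{E}_x[(X_1^J-x)^k]$ are polynomials in $1/J$, of orders $J^{-1},J^{-1},J^{-2},J^{-2}$ for $k=1,2,3,4$, with $\mathbb{E}_x|X_1^J-x|^5=O(J^{-5/2})$ (Cauchy--Schwarz between the $4$th and $6$th moments). I would Taylor-expand $f(X_1^J)$ to fourth order, so that $S_1f=f+f^{(1)}\mathbb{E}_x[X_1^J-x]+\sum_{k=2}^4\tfrac1{k!}f^{(k)}\mathbb{E}_x[(X_1^J-x)^k]+O(\Vert f^{(5)}\Vert_J J^{-5/2})$, and on the diffusion side use the backward equation as in \eqref{TaylorT} to write $T_1f=f+Lf+\tfrac12L^2f+O(J^{-5/2})$; here, because $L=\tfrac1J M$ with $M=\tfrac12x(1-x)\partial_{xx}+(s'x(1-x)+m'(p-x))\partial_x$ a $J$-independent operator, $L^2f=\tfrac1{J^2}M^2f$ is \emph{not} negligible, and the $O(J^{-5/2})$ remainder is controlled using $\mathbb{E}_x|Y_s^J-x|=O(\sqrt{s/J})$ together with $\Vert L^2f\Vert_{\mathscr{C}^1}=O(J^{-2})$ --- which is exactly what makes $\mathscr{C}^5$ (rather than $\mathscr{C}^6$) sufficient. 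Matching the two expansions term by term: the drift term cancels the $\partial_x$-part of $Lf$ up to $O(J^{-2})$; $\tfrac12\mathbb{E}_x[(X_1^J-x)^2]f^{(2)}$ cancels $\tfrac{x(1-x)}{2J}f^{(2)}$ up to $O(J^{-2})$; and one checks that the fourth-order term $\tfrac1{24}\mathbb{E}_x[(X_1^J-x)^4]f^{(4)}=\tfrac{(x(1-x))^2}{8J^2}f^{(4)}+O(J^{-3})$ cancels the pure fourth-derivative part $\tfrac{(x(1-x))^2}{8J^2}f^{(4)}$ of $\tfrac12L^2f$. The outcome is, in analogy with Lemma~\ref{lemme1},
$$\Vert(S_1-T_1)f\Vert_J\le\Vert\gamma_1^J\Vert\,\Vert f^{(1)}\Vert_J+\Vert\gamma_2^J\Vert\,\Vert f^{(2)}\Vert_J+\Vert\gamma_3^J\Vert\,\Vert f^{(3)}\Vert_J+O\!\Big(\frac1{J^{5/2}}\Big),$$
with $\gamma_1^J,\gamma_2^J,\gamma_3^J$ of order $J^{-2}$; contrary to the Moran case, the $f^{(3)}$-coefficient survives, because here $\mathbb{E}_x[(X_1^J-x)^3]=O(J^{-2})$ and does not match the $f^{(3)}$-part of $\tfrac12L^2f$ (one computes $\gamma_3^J=-\tfrac{x(1-x)(1-2x)}{12J^2}+O(J^{-3})$). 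This is why $\Vert f^{(3)}\Vert_J$ appears in the final bound and why $f\in\mathscr{C}^5$ is assumed.

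\emph{Regularity of $T_t$ and conclusion.} By \cite[Th.1]{ethier76}, $T_t:\mathscr{C}^5(I)\to\mathscr{C}^5(I)$. Differentiating $\partial_t\phi=L\phi$ in $x$, exactly as in Lemma~\ref{lemme2}, yields for $\phi^{(j)}=(T_tf)^{(j)}$ an equation $\partial_t\phi^{(j)}=L_j\phi^{(j)}-\nu_j\phi^{(j)}+\psi_j\phi^{(j-1)}$ of the same shape as for Moran (only the constants change): $L_j$ is a diffusion generator keeping $[0,1]$ invariant, $\nu_j=O(1/J)$ is bounded, and $\psi_j=O(1/J)$ is the source created by selection. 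The Feynman--Kac argument of Proposition~\ref{prop1}, already stated for arbitrary $j$, then gives for $j=1,2,3$
$$\Vert(T_kf)^{(j)}\Vert_J\le e^{c_j k/J}\sum_{i=1}^{j}a_{i,j}\Vert f^{(i)}\Vert_J ,$$
with $c_j,a_{i,j}$ independent of $J$ (hence the need for $f\in\mathscr{C}^{j+2}=\mathscr{C}^5$). Inserting the one-step estimate and these bounds into the telescoping sum, using $\sum_{k=0}^{n-1}e^{c_jk/J}=\frac{e^{c_jn/J}-1}{e^{c_j/J}-1}\le J\,\tilde R_j(n/J)$ for $J$ large with $\tilde R_j$ exponential and $J$-independent, and recalling that the Wright--Fisher chain runs on the time scale $n\le JT$ (being ``$J$ times faster'' than the Moran chain), so that the accumulated remainder is $n\cdot O(J^{-5/2})=O(J^{-3/2})=o(1/J)$, one obtains
$$\Vert S_nf-T_nf\Vert_J\le\frac{q(n)}{J}\big(\Vert f^{(1)}\Vert_J+\Vert f^{(2)}\Vert_J+\Vert f^{(3)}\Vert_J\big)+o\!\Big(\frac1J\Big),$$
with $q(n)=\max_{j\in\{1,2,3\}}\tilde R_j(n/J)\,\sup_{J}\big(J^2\Vert\gamma_j^J\Vert\big)$, which is finite (since $\Vert\gamma_j^J\Vert=O(J^{-2})$) and at most exponential in the diffusion time $n/J$.

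\emph{Where the difficulty lies.} All the real work is in the one-step estimate: for Wright--Fisher the generator is of order $J^{-1}$ (not $J^{-2}$), so $L^2f$ contributes at the same order $J^{-2}$ as the coefficients $\gamma_j^J$, and one must carry the moment/Taylor expansions of both $S_1f$ and $T_1f$ far enough to display the exact cancellation of the fourth-derivative contributions and isolate the surviving coefficients $\gamma_1^J,\gamma_2^J,\gamma_3^J$; doing this while keeping only $\mathscr{C}^5$ regularity --- controlling the $T_1f$ remainder through $\mathbb{E}_x|Y^J_s-x|=O(\sqrt{s/J})$ instead of through a third power of $L$ --- is the delicate step. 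Everything else is a transcription of the constant-selection proof of Section~3.
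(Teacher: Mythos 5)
Your proposal is correct and follows the paper's own route essentially step for step: the telescoping decomposition, the one-step comparison via the centred binomial moments of $X_1^J-x$ (whose slower decay in $J$ forces the expansion to fourth order, the cancellation of the $f^{(4)}$-terms against $\tfrac12 L^2f$, the surviving $\gamma_3^J$ coefficient, and hence $f\in\mathscr{C}^5$), and the Feynman--Kac regularity bounds on $(T_tf)^{(j)}$ for $j\le 3$ summed over $n\sim JT$ steps. The one point where you are more careful than the paper is the Taylor remainder of $T_1f$, which you control through the integral form and $\mathbb{E}_x|Y_u-x|=O(\sqrt{u/J})$ rather than through a bound on $L^3f$ (the latter would implicitly require $\mathscr{C}^6$); this is a correct refinement, and the sign discrepancy between your $\gamma_3^J$ and the paper's displayed formula is immaterial since only $\Vert\gamma_3^J\Vert$ enters the estimate.
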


\begin{proof}
Even if the structure of the proof is the same than for the Moran model, however the difference of scale (in $\frac{1}{J}$ now) causes some small differences. Mainly,  the calculation of the $\{\gamma_j\}_{j \in \{1,2,3,4\}}$ is a bit different.
Note that we need to have $f \in C^5$ in the previous theorem, which is stronger than for the Moran process. The main explanation comes from the calculation of $E[(X_{n+1}^J-x)^k|X_{n}^J=x]$, for which for the Wright-Fisher discrete process it is no longer of the order of $\frac{1}{J^k}$. Let us give some details.

First consider the moments $\{ E[\big(X_{n+1}^J-x\big)^k|X_n=x]\}_{k\leqslant 5}$:

\begin{align*}
E[X_{n+1}^J-x|X_n=x]=&m(p-x)+\frac{sx(1-x)}{1+sx}\\
E[\big(X_{n+1}^J-x\big)^2|X_n=x]=&\frac{1}{J}x(1-x)+\frac{1}{J}\left(m(p-x)+\frac{sx(1-x)}{1+sx}\right)\\
&+\left(m(p-x)+\frac{sx(1-x)}{1+sx}\right)^2+O(\frac{1}{J^3})\\
E[\big(X_{n+1}^J-x\big)^3|X_n=x]=&x(x-1)(2x-1)\frac{1}{J^2}\\
&-\frac{1}{J}3x(x-1)\big(m(p-x)+sx(1-x)\big)+O(\frac{1}{J^3})\\
E[\big(X_{n+1}^J-x\big)^4|X_n=x]=&\frac{1}{J^2}3x^2(1-x)^2+O(\frac{1}{J^3})\\
E[\big(X_{n+1}^J-x\big)^5|X_n=x]=& O(\frac{1}{J^3}).
\end{align*}
To get a quantity of the order of $\frac{1}{J^3}$ we need to go to the fifth moment of $X_{n+1}^J-x$, so in the Taylor development we need to have $f$ in $C^5$. Then, 

\begin{align*}
L^{1}f(x)=&\frac{x(1-x)}{2J}f^{(2)}(x)+sx(1-x)+m(p-x)f^{(1)}(x)\\
L^{2}f(x)=&\frac{(x(1-x))^2}{4J^2}f^{(4)}(x)\\
&+ \Big[ \frac{2(1-2x)x(1-x)}{4J^2}+\frac{2x(1-x)(sx(1-x)+m(p-x)}{2J}\Big]f^{(3)}(x)\\
&+ \Big[   \frac{-2x(1-x)}{4J^2} \\
&\quad+\frac{2x(1-x)(s(1-2x)-m)+(1-2x)(sx(1-x)+m(p-x))}{2J}\\
&\quad+\frac{(sx(1-x)+m(p-x))^2}{4J^2}\Big]f^{(2)}(x) \\
&+ \Big[\frac{-2sx(1-x)}{2J}\\
&\quad+(s(1-2x)-m)(sx(1-x)+m(p-x))\Big]f^{(1)}(x)\\
L^{3}f(x)&=O(\frac{1}{J^3}).
\end{align*}

We are now able to give the expression of the $\{\gamma_j\}_{j \in \{1,2,3,4\}}$, as in the lemma $\ref{lemme1}$.

\begin{lemma}
\quad\\
It exists bounded functions of $x$,  $\{\gamma_j\}_{j \in \{1,2,3\}}$ such as when J is big enough, $$ \Vert(S_1-T_1)f\Vert_{J}\leq\Vert\gamma_1^{J}\Vert \Vert f^{(1)} \Vert_{J} +\Vert\gamma_2^{J}\Vert \Vert f^{(2)} \Vert_{J}+\Vert\gamma_3^{J}\Vert \Vert f^{(3)} \Vert_{J}+\frac{K_{1}}{J^3}$$
\label{lemmeWF}
where for $i=1,...,3$, $\Vert\gamma_i^{J}\Vert\sim\frac1{J^2}$.
\end{lemma}

\begin{proof}
The proof of this lemma is exactly the same than in lemma \ref{lemme1}.
Just the calculations are a little bit more tedious:

\begin{align*}
&\gamma_1^J=\frac{-sx(1-x)}{J}
+(s(1-2x)-m)(sx(1-x)+m(p-x))\\
&\gamma_2^J= \frac{-x(1-x)}{4J^2} 
+\frac{xs(6x^2-7x+1)+m(4x^2-2xp-x-p)}{4J}+O(\frac{1}{J^3})\\
&\gamma_3^J=\frac{x(x-1)(2x-1)}{12J^2}+O(\frac{1}{J^3})\\
&\gamma_4^J=O(\frac{1}{J^3})
\end{align*}
\end{proof}
The end of the proof follow exactly the same pattern.

\end{proof}
So  the Wright-Fisher dynamics causes harder calculations than the Moran model but the spirit of the proof is the same.
So All the methods studies in this paper still hold for the Wright-Fisher model.

\bibliographystyle{plain}
\bibliography{biblio}


@book {EK86,
    AUTHOR = {Ethier, Stewart N. and Kurtz, Thomas G.},
     TITLE = {Markov processes, Characterization and convergence},
    SERIES = {Wiley Series in Probability and Mathematical Statistics:
              Probability and Mathematical Statistics},
 PUBLISHER = {John Wiley \& Sons, Inc., New York},
      YEAR = {1986},
     PAGES = {x+534},
      ISBN = {0-471-08186-8},
   MRCLASS = {60J25 (60B10 60F05 60F17 60G44 60J80)},
  MRNUMBER = {838085},
MRREVIEWER = {S. R. S. Varadhan},
       DOI = {10.1002/9780470316658},
       URL = {https://doi.org/10.1002/9780470316658},
}
 @book{Dawson,
    author    = "Dawson.DA",
    title     = "Stochastic Population Systems",
    year      = "2009",
    publisher = "Summer school in probability at PIMS-UBC",
    address   = "8 June-3 July"
 
 @book{Kimura,
    author    = "Kimura.M",
    title     = "The Neutral Theory of Molecular Evolution.",
    year      = "1983",
    publisher = "Cambridge university press",
    address   = "Cambridge CB2 2RU UK"
}
 
    @book{hubbel,
    author    = "Hubbell, S.",
    title     = "The Unified Neutral Theory of Biodiversity and
Biogeography.",
    year      = "2001",
    publisher = "Springler",
    address   = "Princeton University Press, Princeton, NJ"
}


@book{cometsmeyre,
    title = {Calculs stochastiques et modèles de diffusion},
    author = {Comets, F. et  Meyre, T.}
  }  

    @book{diffmultidim,
    author    = "W.Stroock, D. and Varadhan, S.",
    title     = "Multidimensional Diffusion Process",
    year      = "1997",
    publisher = "Springler",
    address   = ""
}


 @book{I.Karatzas, 
    title = {Brownian Motion and Stochastic Calculus },
    author = {Karatzas, I. and Shreve, S.E.},
    year      = "1991",
    publisher = "Springer "
    
}




 @book {ABC,
    AUTHOR = {An\'e, C. and Blach\`ere, S. and Chafa\"\i, D.  and
              Foug\`eres, P and Gentil,I and Malrieu,F and
              Roberto, C. and Scheffer, G},
     TITLE = {Sur les in\'egalit\'es de {S}obolev logarithmiques},
    SERIES = {Panoramas et Synth\`eses [Panoramas and Syntheses]},
    VOLUME = {10},
      NOTE = {With a preface by Dominique Bakry and Michel Ledoux},
 PUBLISHER = {Soci\'et\'e Math\'ematique de France, Paris},
      YEAR = {2000},
     PAGES = {xvi+217},
      ISBN = {2-85629-105-8},
   MRCLASS = {46N20 (26D15 46E99 58J60 60J10 60J60)},
  MRNUMBER = {1845806},
MRREVIEWER = {Emmanuel Russ},
}


 @book{evo,
    title = {Modèle probabilistes en écologie et évolution},
    author = {Méleard, S.},
    year      = "2016 ",
    publisher = "Springer "
    
}

@book{LaleufJC,
    title = {Processus et intégrales stochastiques},
    author = {Laleuf, JC.}
    
}

 @book{SMB,
    author    = "Goel, N.S. and Richter-Dyn, N.",
    title     = "Stochastic Models in Biology.",
    year      = "1st January 1974 ",
    publisher = "Academic Press ",
    address   = ""
   
  }      
  
  @book {ewens2004,
    AUTHOR = {Ewens, W. and Warren, J.},
     TITLE = {Mathematical population genetics. {I}},
    SERIES = {Interdisciplinary Applied Mathematics},
    VOLUME = {27},
   EDITION = {Second},
      NOTE = {Theoretical introduction},
 PUBLISHER = {Springer-Verlag, New York},
      YEAR = {2004},
     PAGES = {xx+417},
      ISBN = {0-387-20191-2},
   MRCLASS = {92-02 (92D10 92D25)},
  MRNUMBER = {2026891},
MRREVIEWER = {Jos\'e F. Fontanari},
       DOI = {10.1007/978-0-387-21822-9},
       URL = {https://doi.org/10.1007/978-0-387-21822-9},
}
  
  
  
  
  

@article {norman77,
    AUTHOR = {Norman, M. Frank},
     TITLE = {Ergodicity of diffusion and temporal uniformity of diffusion
              approximation},
   JOURNAL = {J. Appl. Probability},
  FJOURNAL = {Journal of Applied Probability},
    VOLUME = {14},
      YEAR = {1977},
    NUMBER = {2},
     PAGES = {399--404},
      ISSN = {0021-9002},
   MRCLASS = {60J70 (92A10)},
  MRNUMBER = {0436355},
MRREVIEWER = {Andrew D. Barbour},
       DOI = {10.2307/3213013},
       URL = {https://doi.org/10.2307/3213013},
}

@article {EN89,
    AUTHOR = {Ethier, S. N. and Nagylaki, Thomas},
     TITLE = {Diffusion approximations of the two-locus {W}right-{F}isher
              model},
   JOURNAL = {J. Math. Biol.},
  FJOURNAL = {Journal of Mathematical Biology},
    VOLUME = {27},
      YEAR = {1989},
    NUMBER = {1},
     PAGES = {17--28},
      ISSN = {0303-6812},
   MRCLASS = {60J70 (60F99 92A10)},
  MRNUMBER = {984223},
MRREVIEWER = {Peter Donnelly},
       DOI = {10.1007/BF00276078},
       URL = {https://doi.org/10.1007/BF00276078},
}

@article {ethier76,
    AUTHOR = {Ethier, S. N.},
     TITLE = {A class of degenerate diffusion processes occurring in
              population genetics},
   JOURNAL = {Comm. Pure Appl. Math.},
  FJOURNAL = {Communications on Pure and Applied Mathematics},
    VOLUME = {29},
      YEAR = {1976},
    NUMBER = {5},
     PAGES = {483--493},
      ISSN = {0010-3640},
   MRCLASS = {60J70 (92A10)},
  MRNUMBER = {0428488},
MRREVIEWER = {Stanley Sawyer},
       DOI = {10.1002/cpa.3160290503},
       URL = {https://doi.org/10.1002/cpa.3160290503},
}

 

    @ARTICLE{Moran,
  author    = { Moran, P. A. P.},
  title     = {Random processes in genetics },
  journal   = {Mathematical Proceedings of the Cambridge Philosophical Society.},
  year      = {1958},
  volume    = {54},
  number    = {1},
  pages     = {60-71},
  
}














    @ARTICLE{Simpson,
  author    = { Simpson, E.H.},
  title     = {Measurement of diversity },
  journal   = {Nature},
  year      = {1949},
  volume    = {},
  number    = {163},
  pages     = {688},
  month     = {april 30},
  annote    = { }
}

   @ARTICLE{wakeley2005,
  author    = {Wakeley, J.},
  title     = {The limits of theoretical population genetics},
  journal   = {Genetics},
  year      = {2005},
  volume    = {169},
  number    = {2},
  pages     = {1117--1131}
}

   @ARTICLE{mw2009,
  author    = {Muirhead, C. and Wakeley,J.},
  title     = {Modeling multiallelic selection using a Moran model},
  journal   = {Genetics},
  year      = {2009},
  volume    = {182},
  number    = {4},
  pages     = {1141--1157}
}


    @ARTICLE{KNeuhauser,
  author    = { Krone, S.M. and Neuhauser, C.},
  title     = {ancestral process with selection },
  journal   = {Theoretical Population Biology},
  year      = {1997},
  volume    = {51},
  number    = {},
  pages     = {210-237},
  month     = {},
  annote    = { }
}
 

    @ARTICLE{NKrone,
  author    = { Neuhauser, C. and  Krone, S.M.},
  title     = {The genealogy of Samples in Models With Selection },
  journal   = {Genetics},
  year      = {1997},
  volume    = {145},
  number    = {},
  pages     = {519-534},
  month     = {february},
  annote    = { }
}
    

   @ARTICLE{ wright1977, 
  author    = {Wright, S.},
  title     = {Evolution and the Genetics of Populations},
  journal   = {Univ. of Chicago Press},
  year      = {1977-1978},
  volume    = {Vol. 3(1977), vol. 4 (1978)},
  number    = {13},
  pages     = {443-473 and 460-476},
  month     = {},
  annote    = { }
  }
 
 
 
     

    
   @ARTICLE{ Griffiths  , 
  author    = { Griffiths, R.C. },
  title     = {On the distribution of allele frequencies in a diffusion model},
  journal   = {Theor. Pop.Biol.},
  year      = {1979},
  volume    = {},
  number    = {15},
  pages     = {140-158},
  month     = {},
  annote    = { }
  }
 
  
    @ARTICLE{kalyuzhny, 
  title={A neutral theory with environmental stochasticity explains static and dynamic properties of ecological communities},
  author={Kalyuzhny, M. and Kadmon, R. and Shnerb, N.M.},
  journal={Ecology letters},
  volume={18},
  number={6},
  pages={572--580},
  year={2015},
  publisher={Wiley Online Library}
  }
  
 
 


     @ARTICLE{etienneOlff2,
  title={A novel genealogical approach to neutral biodiversity theory},
  author={Etienne, R.S. and Olff, H.},
  journal={Ecology Letters},
  volume={7},
  number={3},
  pages={170--175},
  year={2004},
  publisher={Wiley Online Library}
}
    
    
    
    @ARTICLE{etienne2005,
  title={A new sampling formula for neutral biodiversity},
  author={Etienne, R.S.},
  journal={Ecology letters},
  volume={8},
  number={3},
  pages={253--260},
  year={2005},
  publisher={Wiley Online Library}
}
  
      @ARTICLE{jabotlohier,
  title={Non-random correlation of species dynamics in tropical tree communities},
  author={Jabot, F. and Lohier, T.,
  journal={Oikos},
  volume={125},
  number={12},
  pages={1733--1742},
  year={2016},
  publisher={Wiley Online Library}
}


  
  
    @ARTICLE{Grieshammer,
  author    = { Grieshammer, M.},
  title     = {Genealogical distance under selection},
  journal   = {??},
  year      = {2018},
  volume    = {},
  number    = {},
  pages     = {},
  month     = {},
  annote    = { }
}

    @ARTICLE{expansionmultiallelicWF,
  author    = {Steinrucken, M. and Wang, Y-X-R. and Yun S.Song, Y-S.},
  title     = {An explicit transition density expansion for amulti allelic Wright-Fisher diffusion with general diploid selection },
  journal   = {Theor Popul Biol. },
  year      = {2013},
  volume    = {},
  number    = {83},
  pages     = {1-14},
  month     = {february},
  annote    = { }
}
  

    @ARTICLE{GriffthsTavare,
  author    = {Griffiths, R.C. and Tavaré, S.},
  title     = {Sampling theory for neutral alleles in a varying environment},
  journal   = {Philos.Trans.R.Soc.London },
  year      = {1994},
  volume    = {},
  number    = {344},
  pages     = {403-410},
  month     = {february},
  annote    = { }
}

@ARTICLE{JabotChave2009,
  title={Inferring the parameters of the neutral theory of biodiversity using phylogenetic information and implications for tropical forests},
  author={Jabot, F. and Chave, J.},
  journal={Ecology letters},
  volume={12},
  number={3},
  pages={239--248},
  year={2009},
  publisher={Wiley Online Library}
}


@ARTICLE{Conditall,
  title={Beta-diversity in tropical forest trees},
  author={Condit, R. and Pitman, N. and Leigh, E.G. and Chave, J. and Terborgh, J. and Foster, R.B. and N{\'u}nez, P. and Aguilar, S. and Valencia, R. and Villa, G. and others},
  journal={Science},
  volume={295},
  number={5555},
  pages={666--669},
  year={2002},
  publisher={American Association for the Advancement of Science}
}



@ARTICLE{ Chisholmall,
  title={Temporal variability of forest communities: empirical estimates of population change in 4000 tree species},
  author={Chisholm, R.A. and Condit, R. and Rahman, K.A. and Baker, P.J. and Bunyavejchewin, S. and Chen, Y-Y. and Chuyong, G. and Dattaraja, H.S. and Davies, S. and Ewango, C.E.N. and others},
  journal={Ecology letters},
  volume={17},
  number={7},
  pages={855--865},
  year={2014},
  publisher={Wiley Online Library}
}



BACKWARD:



@ARTICLE{Viotprocees,
  author    = {Depperschmidt, A. and  Greven, A. and and Pfaelhuber, P.},
  title     = { Tree-
valued Fleming-Viot dynamics with mutation and selection. },
  journal   = {Ann. Appl.Probab.},
  year      = {2012},
  volume    = {},
  number    = {22},
  pages     = {2560-2615},
  month     = {february},
  annote    = { }
}

@ARTICLE{daninoshnerb,
title={Theory of time-averaged neutral dynamics with environmental stochasticity},
  author={Danino, M. and Shnerb, N.M.},
  journal={Physical Review E},
  volume={97},
  number={4},
  pages={042406},
  year={2018},
  publisher={APS}
}

@article{danino2016effect,
  title={The effect of environmental stochasticity on species richness in neutral communities},
  author={Danino, M. and Shnerb, N.M. and Azaele, S. and Kunin, W.E. and Kessler, D.A.},
  journal={Journal of Theoretical biology},
  volume={409},
  pages={155--164},
  year={2016},
  publisher={Elsevier}
}

@article{danino2018stability,
  title={Stability of two-species communities: drift, environmental stochasticity, storage effect and selection},
  author={Danino, M. and Kessler, D.A. and Shnerb, N.M.},
  journal={Theoretical population biology},
  volume={119},
  pages={57--71},
  year={2018},
  publisher={Elsevier}
}

@article{danino2018fixation,
  title={Fixation and absorption in a fluctuating environment},
  author={Danino, M. and Shnerb, N.M.},
  journal={Journal of theoretical biology},
  volume={441},
  pages={84--92},
  year={2018},
  publisher={Elsevier}
}

@article{fung2017species,
  title={Species-abundance distributions under colored environmental noise},
  author={Fung, T. and O’Dwyer, J.P. and Chisholm, R.A.},
  journal={Journal of mathematical biology},
  volume={74},
  number={1-2},
  pages={289--311},
  year={2017},
  publisher={Springer}
}

@preprint{ggp2018,
author={Gackou, G. and Guillin, A. and Personne, A.},
title={Quantitative approximation of a Moran model in random environment by Wright-Fisher process},
year={2018},
note={in preparation}
}

@preprint{pgj2018,
author={Jabot,F. and Guillin, A. and Personne, A.},
title={On the Simpson index for the Moran process with random selection and immigration},
year={2018},
note={arXiv:1809.08890}
}






@article {FOC2017,
    AUTHOR = {Fung, T. and O'Dwyer, JP. and Chisholm, RA.},
     TITLE = {Species-abundance distributions under colored environmental
              noise},
   JOURNAL = {J. Math. Biol.},
  FJOURNAL = {Journal of Mathematical Biology},
    VOLUME = {74},
      YEAR = {2017},
    NUMBER = {1-2},
     PAGES = {289--311},
      ISSN = {0303-6812},
   MRCLASS = {92D40 (37N25 92D25)},
  MRNUMBER = {3590684},
       DOI = {10.1007/s00285-016-1022-4},
       URL = {https://doi.org/10.1007/s00285-016-1022-4},
}

@ARTICLE{VBHM2003,
  author    = {Volkov, I. and  Banavar, J-R. and Hubbell, S-P. and Maritan, A.},
  title     = {Neutral theory and relative species abundance in ecology},
  journal   = {Nature.},
  year      = {2003},
  volume    = {424},
  number    = {22},
  pages     = {1035-1037},
  month     = {August},
  annote    = { }
}


 
 
 
 
 
 
   @ARTICLE{ theinfinitneutralalleles, 
  author    = { Ethier, SN. and Kurtz, TG.},
  title     = {The infinitely many neutral alleles diffusion model},
  journal   = {Adv.Appl.Prob},
  year      = {1981},
  volume    = {},
  number    = {13},
  pages     = {429-452},
  month     = {},
  annote    = { }
  }
 
 
 
 
   
   @article {BCG,
    AUTHOR = {Bakry, D. and Cattiaux, P. and Guillin, A.},
     TITLE = {Rate of convergence for ergodic continuous {M}arkov processes:
              {L}yapunov versus {P}oincar\'e},
   JOURNAL = {J. Funct. Anal.},
  FJOURNAL = {Journal of Functional Analysis},
    VOLUME = {254},
      YEAR = {2008},
    NUMBER = {3},
     PAGES = {727--759},
      ISSN = {0022-1236},
   MRCLASS = {60J35 (47D07 60J25)},
  MRNUMBER = {2381160},
MRREVIEWER = {Ivan Gentil},
       DOI = {10.1016/j.jfa.2007.11.002},
       URL = {https://doi.org/10.1016/j.jfa.2007.11.002},
}
   
      
    @ARTICLE{ Feller1,
  author    = {Feller, W.},
  title     = {The parabolic differential equations and the associated semigroups of transformation},
  journal   = {Ann.Math},
  year      = {1952},
  volume    = {55},
  number    = {},
  pages     = {468-519},
  month     = {},
  annote    = { }
  }
    
    
    
    
    
     @ARTICLE{EthierNorman,
  author    = { Ethier S.N and Norman .F.},
  title     = {Error estimate for the diffusion approximation of the Wright-Fisher model},
  journal   = {Proc.Natl.Acad.Sci.USA},
  year      = {1977},
  volume    = {74},
  number    = {11},
  pages     = {5096-5098},
  month     = {November},
  annote    = { }
  }
    
    
    @ARTICLE{ Feller2,
  author    = {Feller, W.},
  title     = {Diffusion processes in one dimension},
  journal   = {Trans.Am.Math.soc},
  year      = {1954},
  volume    = {77},
  number    = {},
  pages     = {1-30},
  month     = {},
  annote    = { }
    }
 
        @ARTICLE{ onedimdiffusions,
  author    = { Helland, I.},
  title     = {One-dimensional diffusion processes and their boundaries},
  journal   = {},
  year      = {1996},
  volume    = {17},
  number    = {},
  pages     = {},
  month     = {december 2},
  annote    = { }
    }    
    
     @ARTICLE{ Abundo,
  author    = { Abundo, M.},
  title     = {On some properties of one-dimensional diffusion processes on an interval},
  journal   = {Probability and mathematical statistics},
  year      = {1997},
  volume    = {17},
  number    = {},
  pages     = {277-310},
  month     = {},
  annote    = { }
    }    
    
    @ARTICLE{errordiff,
  author    = {Ethier, SN. and Norman, MF.},
  title     = {Error estimate for the diffusion approximation of the Wright-Fisher model},
  journal   = {Genetics},
  year      = {1977},
  volume    = {74},
  number    = {11},
  pages     = {5096-5098},
  month     = {November},
  annote    = { }
    
    }
    
    
      @ARTICLE{scallim,
  author    = {Bansaye, V. and Caballero, M-E and Méléard, S.},
  title     = {Scaling limits of general population process-Wright-Fisher and branching processes in random environment},
  journal   = {},
  year      = {2018},
  volume    = {},
  number    = {},
  pages     = {},
  month     = {february 8},
  annote    = {ArXiv }
  }
      
    
      @ARTICLE{SHIGA,
  author    = {Shiga, T.},
  title     = {Diffusion processes in population genetics},
  journal   = {J.Math.Kyoto Univ},
  year      = {1981},
  volume    = {},
  number    = {},
  pages     = {},
  month     = {},
  annote    = { }
  }

    
     
      @ARTICLE{Wright2,
  author    = {Wright, S.},
  title     = {The differential equation of the distribution
of gene frequencies},
  journal   = {Proc. Natl. Acad. Sci. USA },
  year      = {1945},
  volume    = {},
  number    = {31},
  pages     = {382-389},
  month     = {},
  annote    = { }
  } 
    
    
    
    
    
    
   
      @ARTICLE{fisher1,
  author    = {Fisher, RA.},
  title     = {On the dominance ratio},
  journal   = {Proc. Roy. Soc Edinburgh },
  year      = {1922},
  volume    = {},
  number    = {42},
  pages     = {321-341},
  month     = {},
  annote    = { }
  } 
    
    
    
      @ARTICLE{fisher2,
  author    = {Fisher, RA.},
  title     = {The Genetical Theory of Natural Selection},
  journal   = {Clarendon Press, Oxford },
  year      = {1930},
  volume    = {},
  number    = {},
  pages     = {},
  month     = {},
  annote    = { }
  } 
    
    
   @preprint{cmv,
   author={Coron, C. and M\'el\'eard, S. and Villemonais, D.},
   title = {Impact of demography on extinction/fixation events},
   year= {2018},
   note={To appear in {\it Journal of Mathematical Biology}} 
    
    
      @ARTICLE{Wright1,
  author    = {Wright, S.},
  title     = {Evolution in Mendelian populations},
  journal   = { Genetics },
  year      = {1931},
  volume    = {},
  number    = {16},
  pages     = {97-159},
  month     = {},
  annote    = { }
  } 
    
    
 @incollection {miclo2003,
    AUTHOR = {Miclo, L.},
     TITLE = {About projections of logarithmic {S}obolev inequalities},
 BOOKTITLE = {S\'eminaire de {P}robabilit\'es, {XXXVI}},
    SERIES = {Lecture Notes in Math.},
    VOLUME = {1801},
     PAGES = {201--221},
 PUBLISHER = {Springer, Berlin},
      YEAR = {2003},
   MRCLASS = {60J60 (35P15)},
  MRNUMBER = {1971587},
MRREVIEWER = {Feng-Yu Wang},
       DOI = {10.1007/978-3-540-36107-7_6},
       URL = {https://doi.org/10.1007/978-3-540-36107-7_6},
} 
    
  
@article {stannat2000,
    AUTHOR = {Stannat, W.},
     TITLE = {On the validity of the log-{S}obolev inequality for symmetric
              {F}leming-{V}iot operators},
   JOURNAL = {Ann. Probab.},
  FJOURNAL = {The Annals of Probability},
    VOLUME = {28},
      YEAR = {2000},
    NUMBER = {2},
     PAGES = {667--684},
      ISSN = {0091-1798},
   MRCLASS = {60J35 (35P15 46N20 60G57 60K35 92D15)},
  MRNUMBER = {1782270},
MRREVIEWER = {Laurent Miclo},
       DOI = {10.1214/aop/1019160256},
       URL = {https://doi.org/10.1214/aop/1019160256},
}  
    
    @article {shimakura1977,
    AUTHOR = {Shimakura, N.},
     TITLE = {\'Equations diff\'erentielles provenant de la g\'en\'etique des
              populations},
   JOURNAL = {T\^ohoku Math. J.},
  FJOURNAL = {The Tohoku Mathematical Journal. Second Series},
    VOLUME = {29},
      YEAR = {1977},
    NUMBER = {2},
     PAGES = {287--318},
      ISSN = {0040-8735},
   MRCLASS = {92A15 (35K99)},
  MRNUMBER = {0504058},
MRREVIEWER = {H. A. Lauwerier},
}
    
    
    
    
 @article {Schreiber,
    AUTHOR = { Schreiber.S},
     TITLE = {Persistence for stochastic difference equations: A mini review},
  FJOURNAL = {Journal of Difference Equations and Applications},
      YEAR = {2012},
    NUMBER = {18},
     PAGES = {1381–1403.},
      
}
    
    
    
   
 @article {benaimpersistence,
    AUTHOR = { Benaïm.M},
     TITLE = {Stochastic Persistence },
   JOURNAL = {arXiv:1806.08450v2},
      YEAR = {June 2018},
      
}
    

  
 @article {PDMPprop,
    AUTHOR = {  M. Benaïm, S. Le Borgne, F. Malrieu, and P-A. Zitt},
     TITLE = { Qualitative properties of certain piecewise deterministic markov
processes },
   JOURNAL = {Annales de l’IHP},
   VOLUME = {51},
      YEAR = {2015},
    NUMBER = {3},
     PAGES = {1040 – 1075},
      
}
         @article {benaimschreiber,
    AUTHOR = {    S. Schreiber, M. Benaïm, and KAS Atchadé},
     TITLE = { Persistence in fluctuating
environments },
   JOURNAL = {Journal of Mathematical Biology},
   VOLUME = {62},
      YEAR = {2011},
     PAGES = {655–683},
}
    
  
  
  

\begin{thebibliography}{10}

\bibitem{danino2018fixation}
M.~Danino and N.M. Shnerb.
\newblock Fixation and absorption in a fluctuating environment.
\newblock {\em Journal of theoretical biology}, 441:84--92, 2018.

\bibitem{daninoshnerb}
M.~Danino and N.M. Shnerb.
\newblock Theory of time-averaged neutral dynamics with environmental
  stochasticity.
\newblock {\em Physical Review E}, 97(4):042406, 2018.

\bibitem{danino2016effect}
M.~Danino, N.M. Shnerb, S.~Azaele, W.E. Kunin, and D.A. Kessler.
\newblock The effect of environmental stochasticity on species richness in
  neutral communities.
\newblock {\em Journal of Theoretical biology}, 409:155--164, 2016.

\bibitem{Dawson}
Dawson.DA.
\newblock {\em Stochastic Population Systems}.
\newblock Summer school in probability at PIMS-UBC, 8 June-3 July, 2009.

\bibitem{Viotprocees}
A.~Depperschmidt, A.~Greven, , and P.~Pfaelhuber.
\newblock Tree- valued fleming-viot dynamics with mutation and selection.
\newblock {\em Ann. Appl.Probab.}, (22):2560--2615, february 2012.

\bibitem{ethier76}
S.~N. Ethier.
\newblock A class of degenerate diffusion processes occurring in population
  genetics.
\newblock {\em Comm. Pure Appl. Math.}, 29(5):483--493, 1976.

\bibitem{EN89}
S.~N. Ethier and Thomas Nagylaki.
\newblock Diffusion approximations of the two-locus {W}right-{F}isher model.
\newblock {\em J. Math. Biol.}, 27(1):17--28, 1989.

\bibitem{errordiff}
SN. Ethier and MF. Norman.
\newblock Error estimate for the diffusion approximation of the wright-fisher
  model.
\newblock {\em Genetics}, 74(11):5096--5098, November 1977.

\bibitem{EK86}
Stewart~N. Ethier and Thomas~G. Kurtz.
\newblock {\em Markov processes, Characterization and convergence}.
\newblock Wiley Series in Probability and Mathematical Statistics: Probability
  and Mathematical Statistics. John Wiley \& Sons, Inc., New York, 1986.

\bibitem{etienne2005}
R.S. Etienne.
\newblock A new sampling formula for neutral biodiversity.
\newblock {\em Ecology letters}, 8(3):253--260, 2005.

\bibitem{etienneOlff2}
R.S. Etienne and H.~Olff.
\newblock A novel genealogical approach to neutral biodiversity theory.
\newblock {\em Ecology Letters}, 7(3):170--175, 2004.

\bibitem{ewens2004}
W.~Ewens and J.~Warren.
\newblock {\em Mathematical population genetics. {I}}, volume~27 of {\em
  Interdisciplinary Applied Mathematics}.
\newblock Springer-Verlag, New York, second edition, 2004.
\newblock Theoretical introduction.

\bibitem{Feller2}
W.~Feller.
\newblock Diffusion processes in one dimension.
\newblock {\em Trans.Am.Math.soc}, 77:1--30, 1954.

\bibitem{fisher1}
RA. Fisher.
\newblock On the dominance ratio.
\newblock {\em Proc. Roy. Soc Edinburgh}, (42):321--341, 1922.

\bibitem{fisher2}
RA. Fisher.
\newblock The genetical theory of natural selection.
\newblock {\em Clarendon Press, Oxford}, 1930.

\bibitem{FOC2017}
T.~Fung, JP. O'Dwyer, and RA. Chisholm.
\newblock Species-abundance distributions under colored environmental noise.
\newblock {\em J. Math. Biol.}, 74(1-2):289--311, 2017.

\bibitem{pgj2018}
F.~Jabot, A.~Guillin, and A.~Personne.
\newblock On the simpson index for the moran process with random selection and
  immigration, 2018.
\newblock arXiv:1809.08890.

\bibitem{kalyuzhny}
M.~Kalyuzhny, R.~Kadmon, and N.M. Shnerb.
\newblock A neutral theory with environmental stochasticity explains static and
  dynamic properties of ecological communities.
\newblock {\em Ecology letters}, 18(6):572--580, 2015.

\bibitem{Kimura}
Kimura.M.
\newblock {\em The Neutral Theory of Molecular Evolution.}
\newblock Cambridge university press, Cambridge CB2 2RU UK, 1983.

\bibitem{Moran}
P.~A.~P. Moran.
\newblock Random processes in genetics.
\newblock {\em Mathematical Proceedings of the Cambridge Philosophical
  Society.}, 54(1):60--71, 1958.

\bibitem{norman77}
M.~Frank Norman.
\newblock Ergodicity of diffusion and temporal uniformity of diffusion
  approximation.
\newblock {\em J. Appl. Probability}, 14(2):399--404, 1977.

\bibitem{EthierNorman}
Ethier S.N and Norman .F.
\newblock Error estimate for the diffusion approximation of the wright-fisher
  model.
\newblock {\em Proc.Natl.Acad.Sci.USA}, 74(11):5096--5098, November 1977.

\bibitem{Wright1}
S.~Wright.
\newblock Evolution in mendelian populations.
\newblock {\em Genetics}, (16):97--159, 1931.

\bibitem{Wright2}
S.~Wright.
\newblock The differential equation of the distribution of gene frequencies.
\newblock {\em Proc. Natl. Acad. Sci. USA}, (31):382--389, 1945.

\bibitem{diffmultidim}
D.~W.Stroock and S.~Varadhan.
\newblock {\em Multidimensional Diffusion Process}.
\newblock Springler, 1997.

\end{thebibliography}

\end{document}